\theoremstyle{plain}
\newtheorem{thm}{Theorem}[section]
\newtheorem{theorem}[thm]{Theorem}
\newtheorem{corollary}[thm]{Corollary}
\newtheorem{proposition}[thm]{Proposition}
\newtheorem{lemma}[thm]{Lemma}
\newtheorem{fact}[thm]{Fact}
\newtheorem{remark}[thm]{Remark}
\theoremstyle{definition}
\newtheorem{example}[thm]{Example}
\def\@seccntformat#1{\csname the#1\endcsname.\quad}
\def\@seccntformat#1{\csname the#1\endcsname%
\expandafter\ifx\csname#1\endcsname\subsection.\fi\quad}
\newlength{\barlength}
\newcommand{\minusfill}{$\mathsurround=0pt\mathord- \mkern-6mu
   \cleaders\hbox{$\mkern-3mu \mathord- \mkern-3mu$}\hfill
     \mkern-6mu \mathord-$}
\newcommand{\yokobo}{\hbox to 1.2em{\minusfill}}
\newcommand{\equalfill}{$\mathsurround=0pt\mathord= \mkern-6mu
    \cleaders\hbox{$\mkern-3mu \mathord= \mkern-3mu$}\hfill
       \mkern-6mu \mathord=$}
\newcommand{\Longlongrightarrow}
       {\hbox to 2em{\equalfill$\mkern-3mu\Rightarrow$}}
\newcommand{\Longlongleftarrow}
       {\hbox to 2em{$\Leftarrow\mkern-3mu$\equalfill}}
\newcommand{\tsume}{\kern-.35em}
\newlength{\circlength}
\newcommand\Sol{{\mathcal {S}}\!{\text{\it{ol}}}\,}
\begin{document}
\title{A generating operator for Rankin--Cohen brackets}
\author{Toshiyuki Kobayashi and
Michael Pevzner}


\maketitle

\begin{abstract}
Motivated by the classical ideas of generating functions
 for orthogonal polynomials, 
 we initiate a new line of
 investigation
 on \lq\lq{generating operators}\rq\rq\
 for a family of differential operators
between two manifolds.  
We prove a novel formula of the generating operators
 for the Rankin--Cohen brackets
 by using higher-dimensional residue calculus.  
Various results on the generating operators are also explored from the perspective of infinite-dimensional representation theory.  
\end{abstract}

\noindent
\textit{Keywords and phrases:}
generating operator, symmetry breaking operator, holographic transform, 
Rankin--Cohen bracket, orthogonal polynomial, branching rule, Hardy space.  

\medskip
\noindent
\textit{2020 MSC}:
Primary
22E45, 
47B38
;
Secondary
11F11, 
32A27, 
30H10, 
33C45, 
43A85.

\section{Introduction}
\label{sec:Intro}

To any sequence  $\{a_\ell\}_{\ell\in\mathbb N}$ one may associate a
formal power series such as $\underset{\ell=0}{\overset{\infty}{\sum}}
a_\ell {t^\ell}$ or $\underset{\ell=0}{\overset{\infty}{\sum}}
a_\ell\frac{t^\ell}{\ell!}$ .  The resulting \emph{generating functions} are fascinating objects providing powerful tools
for studying various combinatorial problems when $a_\ell$ are integers or, more generally,
 polynomials. 
One may
quantize this construction by considering differential operators as
non-commutative analogues of polynomials
 and may study the resulting 
 \lq\lq{generating operators}\rq\rq. 
Dealing with the sequence of differential
operators given by iterated powers of some remarkable operator yields
the notion of an operator semigroup which is nowadays a classical tool
for the spectral theory of unbounded operators 
({\it{e.g.}} the Hille--Yosida theory). 
We explore yet another direction by
introducing a sequence of differential operators
 with a different
algebraic structure
 which is not defined by one single operator anymore.

Let us start with our general setting.  
Suppose that $\Gamma(X)$ and $\Gamma(Y)$ are the spaces
 of functions 
 on $X$ and $Y$, 
 respectively.  
Given a family of linear operators
 $R_{\ell}\colon \Gamma(X) \to \Gamma(Y)$, 
 we consider a formal power series
\begin{equation}
\label{eqn:Rlt}
T\equiv T(\{R_{\ell}\}; t):=\sum_{\ell=0}^{\infty}\frac{R_{\ell}}{\ell!}t^{\ell} \in \operatorname{Hom}(\Gamma(X), \Gamma(Y)) \otimes {\mathbb{C}}[[t]]. 
\end{equation}

When $X=\{\text{point}\}$, 
 $R_{\ell}$ is identified with an element of $\Gamma(Y)$, 
 and such a formal power series is called a {\it{generating function}}, 
 which has been particularly prominent
 in the classical study of orthogonal polynomials
 for $\Gamma(Y)={\mathbb{C}}[y]$, 
 see {\it{e.g.,}} \cite{xSrMa84, xSz75}.

When $X=Y$, 
 $\operatorname{Hom}(\Gamma(X), \Gamma(Y))\simeq {\operatorname{End}}(\Gamma(X))$
 has a ring structure
 and one may take $R_{\ell}$
 to be the $\ell$-th power of a {\it{single}} operator $R$
 on $X$.  
In this case, 
 the operator $T$ in \eqref{eqn:Rlt}
 may be written as $e^{t R}$
 if the summation converges.  
We note that even if $R$ is a differential operator 
 on a manifold $X$, 
 the resulting operator $T=e^{t R}$ is not a differential operator
 any more in general .  
For example, 
 if $R=\frac {d}{d z}$ acting on ${\mathcal{O}}({\mathbb{C}})$, 
 then $T=e^{t \frac d {dz}}$ is the shift operator
 $f(z) \mapsto f(z+t)$.  
For a self-adjoint operator $R$
 with bounded eigenvalues from the above, 
 the operator $T$ has been intensively studied
 as the {\it{semigroup}} $e^{t R}$
 generated by $R$ for $\operatorname{Re} t>0$:
 typical examples include 

\par\indent
$\bullet$\enspace
 the heat kernel
 for $R=\Delta$, 

\par\indent
$\bullet$\enspace
the Hermite semigroup
 for $R=\frac 1 4 (\Delta-|x|^2)$  on $L^2({\mathbb{R}}^n)$, 

\par\indent
$\bullet$\enspace
the Laguerre semigroup
 for $R=|x|(\frac{\Delta}4-1)$ on $L^2({\mathbb{R}}^n, \frac{1}{|x|} d x)$.

Let us consider a more general setting
 where we allow $X \ne \{\text{point}\}$ and $X \ne Y$.  
In this generality, 
 we refer to $T$ in \eqref{eqn:Rlt}
 as the {\it{generating operator}}
 for a family of operators
 $R_{\ell} \colon \Gamma(X) \to \Gamma(Y)$.

In the present work
 we initiate a new line of investigation of 
\lq\lq{generating operators}\rq\rq\
 in the setting
 that $(X,Y)=({\mathbb{C}}^2, {\mathbb{C}})$
 and that $\{R_{\ell}\}$ are the Rankin--Cohen brackets
\cite{xCo75, xRa56}.  
We shall find a closed formula 
 of the generating operator $T$ as an integral operator, 
 through which we explore its basic properties
 and various aspects.  
\vskip 3pc

It is known
 that covariant differential operators are often obtained as residues
of a meromorphic family of integral transformations. 
For instance, the iterated powers of the Dirac operator are
 the residues of the meromorphic family
 of the Knapp--Stein intertwining operators, 
 see {\it{e.g.,}} a recent paper \cite{CLOR20}.

The inverse direction is more involved.  In fact, 
 some covariant differential operators cannot be obtained as residues, 
 which are referred to as {\it{sporadic operators}}.  
One of the important applications of the \emph{generating operator} introduced in this article provides us a method
 to go in the inverse direction, 
 namely,
 to construct a meromorphic family of non-local symmetry breaking operators
 out of discrete data.  
In the subsequent paper \cite{K23}, 
 we give a toy model
 which constructs various fundamental operators
 such as invariant trilinear forms
 on infinite-dimensional representations, 
 the Fourier and the Poisson transforms
 on the anti-de Sitter space, 
 and non-local symmetry breaking operators
 for the fusion rules 
 among others, 
 out of just countable data of the Rankin--Cohen brackets, 
 for which the key of the proof is
 the explicit formula \eqref{eqn:Luminy230314}
 of the {\it{generating operator}}
proved in this article.

\vskip 1pc

The article is organized as follows.  
In Section \ref{sec:T} we give an integral expression of 
 the \lq\lq{generating operator}\rq\rq\ $T$
 of the Rankin--Cohen bidifferential operators
 (Theorem \ref{thm:gRC}), 
 and discuss the domain of holomorphy.  
In Section \ref{sec:P}, 
 we introduce a second-order differential operator $P$
 on ${\mathbb{C}}^2$
 which plays a key role 
 in the detailed analysis of $T$
 (Theorems \ref{thm:Luminy0316} and \ref{thm:Tlsol}).  
In Section \ref{sec:Hardy}
 we focus on operators between Hilbert spaces, 
 and prove
 that $T$ gives rise to a natural decomposition 
 of the completed tensor product of two Hardy spaces
 (Theorem \ref{thm:Hardy}).  
In Section \ref{sec:SL}
 we discuss briefly different perspectives
 of the generating operator $T$ from the viewpoint of unitary representation theory
 of real reductive groups, 
 in particular, from that of symmetry breaking operators
 and holographic operators
 associated with branching problems 
 ({\it{fusion rules}})
 for $SL(2,{\mathbb{R}})$.  
\vskip 1pc

Notation.\enspace
${\mathbb{N}}=\{0,1,2,\cdots\}$, 
${\mathbb{R}}_+=\{x \in {\mathbb{R}}: x>0\}$.  

\section{Basic properties of the integral operator $T$}
\label{sec:T}

Let $D$ be an open set in ${\mathbb{C}}$.  
For a holomorphic function $f(\zeta_1, \zeta_2)$ in $D \times D$, 
 we introduce an integral transform by 
\begin{equation}
\label{eqn:Luminy230314}
(T f)(z,t):=
\frac{1}{(2 \pi \sqrt{-1})^2}\oint_{C_1}\oint_{C_2} 
\frac{f(\zeta_1, \zeta_2)}{(\zeta_1-z)(\zeta_2-z)+t(\zeta_1 - \zeta_2)}d \zeta_1 d \zeta_2, 
\end{equation}
where $C_j$ are contours in $D$
 around the point $z$ ($j=1,2$).  
The denominator will be denoted by 
\begin{equation}
\label{eqn:Q}
Q \equiv Q(\zeta_1, \zeta_2;z, t):=(\zeta_1-z)(\zeta_2-z)+t(\zeta_1-\zeta_2).  
\end{equation}

We note that the denominator is an irreducible polynomial of $\zeta_1$ and $\zeta_2$ when $t \ne 0$. 
We shall give closed formulas of the transform $T f(z,t)$
 for a family of meromorphic functions $f(\zeta_1, \zeta_2)$, corresponding to the minimal
 $K$-types in representation theory, see Example \ref{ex:23050410}.

We begin with general properties of the operator $T$.  
\begin{theorem}
\label{thm:23032333}
~~~
\newline
{\rm{(1)}}\enspace
There exists an open neighbourhood $U$ of $D \times \{0\}$
 in ${\mathbb{C}}^2$ 
such that $T \colon {\mathcal{O}}(D \times D) \to {\mathcal{O}}(U)$
 is well-defined.  
\par\noindent
{\rm{(2)}}\enspace
$T f(z,0)=f(z,z)$ for any $z \in D$.  
\par\noindent
{\rm{(3)}}\enspace
For any neighbourhood $U$ of $D \times \{0\}$ in ${\mathbb{C}}^2$, 
 $T$ is injective.  
\end{theorem}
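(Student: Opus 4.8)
The plan is to dispatch the three assertions in order, drawing the analytic input for (1) and (2) directly from the factored shape of $Q$ at $t=0$, and proving (3) by reading off the Taylor coefficients of $Tf$ in $t$ as Rankin--Cohen operators. For (1), I would first fix a compact set $K\subset D$ and a radius $r>0$ so small that the closed disc of radius $r$ about each $z\in K$ lies in $D$, and take both $C_1$ and $C_2$ to be the circle of radius $r$ centred at $z$. Writing $w_j=\zeta_j-z$, we have $Q=w_1w_2+t(w_1-w_2)=w_2(w_1-t)+tw_1$, which for fixed $\zeta_1\in C_1$ is linear in $w_2$ with unique zero $w_2=tw_1/(t-w_1)$; since $|w_1|=r$, the bound $|t-w_1|\ge r-|t|$ gives $|w_2|\le |t|\,r/(r-|t|)<r$ whenever $|t|<r/2$. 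Thus $Q$ has no zero on $C_1\times C_2$, the integrand is continuous there, and the double integral converges. For holomorphy I would work near a base point $(z_0,0)$, freeze the contours to circles about $z_0$ (which still enclose $z$ and, by the same estimate, still avoid $\{Q=0\}$ for $(z,t)$ in a small polydisc), observe that the integrand is then holomorphic in $(z,t)$ and jointly continuous in all variables, and invoke the standard holomorphy of integrals of such families over a compact cycle; Cauchy's theorem identifies the result with \eqref{eqn:Luminy230314}. Exhausting $D$ by such compacta yields the neighbourhood $U$.

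For (2), since $Tf$ is continuous in $t$ I may simply set $t=0$ in the integrand, where $Q=(\zeta_1-z)(\zeta_2-z)$ factorizes. Applying the Cauchy integral formula first in $\zeta_2$ and then in $\zeta_1$ gives
\begin{equation*}
(Tf)(z,0)=\frac{1}{2\pi\sqrt{-1}}\oint_{C_1}\frac{f(\zeta_1,z)}{\zeta_1-z}\,d\zeta_1=f(z,z).
\end{equation*}

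For (3), the key preliminary is the Taylor expansion of $Tf$ in $t$. On $C_1\times C_2$ one has $|t(\zeta_1-\zeta_2)|<|(\zeta_1-z)(\zeta_2-z)|$ for small $t$, so $1/Q$ expands as the uniformly convergent series $\sum_{\ell\ge0}(-t)^\ell (w_1-w_2)^\ell (w_1w_2)^{-\ell-1}$. Integrating term by term and applying Cauchy's formula in each variable shows that the coefficient of $t^\ell$ is $(D_\ell f)|_{\mathrm{diag}}$, where $D_\ell$ is the order-$\ell$ bidifferential operator with symbol proportional to $\rho_\ell(\xi_1,\xi_2)=\sum_{r=0}^{\ell}\binom{\ell}{r}\frac{(-1)^r}{r!\,(\ell-r)!}\,\xi_1^r\xi_2^{\ell-r}$, i.e. the Rankin--Cohen bracket of degree $\ell$. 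Hence $Tf\equiv0$ on any neighbourhood of $D\times\{0\}$ forces $(D_\ell f)|_{\mathrm{diag}}\equiv0$ on $D$ for every $\ell$ (this uses only the jet of $Tf$ along $t=0$, which is why any neighbourhood suffices). Because each of these is the zero function of $z$, I may differentiate in $z$; the chain rule $\partial_z\big(g|_{\mathrm{diag}}\big)=\big((\partial_{\zeta_1}+\partial_{\zeta_2})g\big)|_{\mathrm{diag}}$ then upgrades this to $\big((\partial_{\zeta_1}+\partial_{\zeta_2})^m D_\ell f\big)|_{\mathrm{diag}}\equiv0$ for all $\ell,m\ge0$.

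The final and decisive step is to show these relations kill every diagonal jet of $f$. Fixing a total order $d$, I would consider the $d+1$ operators $(\partial_{\zeta_1}+\partial_{\zeta_2})^{d-\ell}D_\ell$ with $\ell=0,\dots,d$, all of order $d$, with symbols $(\xi_1+\xi_2)^{d-\ell}\rho_\ell(\xi_1,\xi_2)$. Since $\rho_\ell(-1,1)=\sum_{r=0}^{\ell}\binom{\ell}{r}/\big(r!\,(\ell-r)!\big)\ne0$, the symbol $(\xi_1+\xi_2)^{d-\ell}\rho_\ell$ vanishes to order exactly $d-\ell$ along the line $\{\xi_1+\xi_2=0\}$; having pairwise distinct vanishing orders, these $d+1$ homogeneous polynomials of degree $d$ are linearly independent, hence a basis of all degree-$d$ symbols. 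Therefore every $\partial_{\zeta_1}^a\partial_{\zeta_2}^b f$ with $a+b=d$ vanishes on the diagonal, and letting $d$ range over $\mathbb{N}$ annihilates the full Taylor series of $f$ at each diagonal point; the identity theorem (using that $D$, hence $D\times D$, is connected) gives $f\equiv0$. I expect this completeness statement to be the main obstacle: a single Rankin--Cohen bracket per degree comes nowhere near determining $f$, and only after adjoining the tangential powers $(\partial_{\zeta_1}+\partial_{\zeta_2})^m$ --- legitimate precisely because each $D_\ell f$ vanishes identically in $z$ --- does one recover the whole jet, the nonvanishing of $\rho_\ell$ on $\{\xi_1+\xi_2=0\}$ being exactly what makes the triangular argument run.
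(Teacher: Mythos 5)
Your proofs of (1) and (2) are correct and follow the paper's line: a contour estimate showing $Q\neq 0$ on a product of circles when $|t|$ is small compared with the distance from $z$ to $\partial D$ (the paper makes the resulting domain explicit as $U_D$ in \eqref{eqn:UD}, you use a compact exhaustion of $D$ --- the same mechanism), then holomorphy of a parameter integral over a compact cycle, and iterated Cauchy at $t=0$ where $Q$ factors.

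For (3) your argument is correct but organized genuinely differently from the paper's, although both run on the same fuel. Like you, the paper first identifies the Taylor coefficients of $Tf$ in $t$ with the Rankin--Cohen brackets (it delegates this to Theorem \ref{thm:gRC}, proved via the expansion \eqref{eqn:QTaylor}, exactly the series you write down inline), and like you it uses the tangential identity $\partial_z\bigl(g|_{\mathrm{diag}}\bigr)=\bigl((\partial_{\zeta_1}+\partial_{\zeta_2})g\bigr)|_{\mathrm{diag}}$. But then the paper runs an induction on the jet order $k$: differentiating the vanishing of all $a_{i,j}$ with $i+j=k$ yields the relations $a_{k+1-j,j}=(-1)^ja_{k+1,0}$, so degree $k+1$ has a single unknown, which the one new equation $R_{k+1}f|_{\mathrm{diag}}=0$ kills because the alternating signs cancel against the alternating coefficients of $R_{k+1}$, leaving the positive sum $\sum_j\binom{k+1}{j}^2$. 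You instead prove, for each fixed degree $d$, a completeness statement in the space of symbols: the $d+1$ polynomials $(\xi_1+\xi_2)^{d-\ell}\rho_\ell$ have pairwise distinct vanishing orders along $\{\xi_1+\xi_2=0\}$, hence form a basis of degree-$d$ homogeneous symbols, so all $\partial_{\zeta_1}^a\partial_{\zeta_2}^b f$ with $a+b=d$ vanish on the diagonal. The nonvanishing $\rho_\ell(-1,1)=\binom{2\ell}{\ell}/\ell!\neq 0$ that makes your triangular argument run is precisely the paper's positive sum in disguise, so the two proofs rest on the identical combinatorial fact (the alternation of signs in \eqref{eqn:RC}); what your packaging buys is a clean, reusable linear-algebra lemma (Rankin--Cohen symbols together with tangential powers span all constant-coefficient symbols), while the paper's induction is leaner and never isolates that statement. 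One further point in your favour: both proofs need $D$ connected for the final identity-theorem step --- injectivity genuinely fails for disconnected $D$, since $T$ only sees $f$ near the diagonal --- and you flag this explicitly, whereas the paper invokes connectedness only later (in Corollary \ref{cor:Peigen}).
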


\begin{proof}[Proof of (1) and (2) in Theorem \ref{thm:23032333}]
(1)\enspace
For $z \in D$ and $t \in {\mathbb{C}}$, 
 we define an analytic set by 
\[
 {\mathcal{N}}_{z,t}:=
 \{(\zeta_1, \zeta_2)\in D \times D:
  Q(\zeta_1, \zeta_2;z,t)=0\}.  
\]
Then there exists a neighbourhood $W$ of $t=0$ such that 
$C_1\times C_2\subset  {\mathcal{N}}_{z,t}$ for all $t\in W$.
The integral \eqref{eqn:Luminy230314} does not change
 if we replace $C_1 \times C_2$ 
 by a compact surface $S$, 
 as far as $S$ belongs to the same second homology class
in $D \times D \setminus {\mathcal{N}}_{z,t}$.  
We define $d(z)\equiv d(z,\partial D)$
 to be the distance from $z$
 to the boundary $\partial D$.  
We set $d(z):=\infty$ if $\partial D=\emptyset$, 
 namely, 
 if $D={\mathbb{C}}$.  
We claim that $T f (z,t)$ is well-defined
 and holomorphic in 
\begin{equation}
\label{eqn:UD}
  U_D:=\{(z,t) \in D \times {\mathbb{C}}:
  2|t|< d(z, \partial D)\}.  
\end{equation}

We fix $z\in D$, and set $R:=d(z,\partial D)$. Let $\varepsilon>0$. 
 If we take $C_1=C_2$ 
 to be the circle of radius $R(1-\varepsilon)$
 centered at $z$, 
 then $(C_1 \times C_2) \cap  {\mathcal{N}}_{z',t}=\emptyset$
 for any $(z',t)$ satisfying $|z'-z|< R \varepsilon$
 and $2|t|<R(1-3 \varepsilon)$
 because
\[
  |(\zeta_1-z')(\zeta_2-z')| 
  >
  R^2(1-2\varepsilon)^2
  >
  R^2(1-\varepsilon)(1-3 \varepsilon)
  > 
  |t(\zeta_1-\zeta_2)|.  
\]
Therefore, $(C_1\times C_2)\cap {\mathcal{N}}_{z',t}=\emptyset$, hence
 $T f(z',t)$ is holomorphic 
 in this region.  
Taking the limit as $\varepsilon \to 0$, 
 we conclude 
 that $T f$ is well-defined 
 and holomorphic in the open neighbourhood 
 of $\{z\}\times\{ t\in\mathbb C : 2\vert t\vert< d(z,\partial D)\}$ for every $z\in D$, hence it is
 holomorphic in $U_D$.  
\newline
(2)\enspace
Clear from Cauchy's integral formula.  
\end{proof}

\begin{example}
\label{ex:UD}
We make explicit two important examples of the domains $U_D$ introduced in \eqref{eqn:UD}.
\newline
{\rm{(1)}}\enspace
$U_D={\mathbb{C}} \times {\mathbb{C}}$
 if $D={\mathbb{C}}$.  
\newline
{\rm{(2)}}\enspace
$U_D=
  \{(z, t) \in {\mathbb{C}}^2: 2|t|<\operatorname{Im} z\}
$
if $D$ is the upper half plane
$
  \Pi:=\{\zeta \in {\mathbb{C}}: \operatorname{Im} \zeta >0\}.  
$
\end{example}

Before giving a proof of the third statement 
 of Theorem \ref{thm:23032333}, 
 we show
 that $T$ is a \lq\lq{generating operator}\rq\rq\
 for the family of the Rankin--Cohen brackets.  
For $\ell \in {\mathbb{N}}$ we define 
$
  R_{\ell} \colon {\mathcal{O}}(D \times D) \to 
  {\mathcal{O}}(D)
$,
 $f(\zeta_1, \zeta_2) \mapsto (R_{\ell}f)(z)$
 by 
\begin{equation}
\label{eqn:RC}
  R_{\ell} f(z):=\sum_{j=0}^{\ell} (-1)^j \left(\ell \atop j\right)^2 
 \left. \frac{\partial^{\ell} f(\zeta_1, \zeta_2)}{\partial \zeta_1^{\ell-j} \partial \zeta_2^j}\right|_{\zeta_1=\zeta_2=z}.  
\end{equation}

\begin{theorem}
[generating operator of the Rankin--Cohen brackets]
\label{thm:gRC}
The integral operator $T$
 in \eqref{eqn:Luminy230314} 
 is expressed as 
\[
   T f(z,t)=\sum_{\ell =0}^{\infty} \frac{t^{\ell}}{\ell !}R_{\ell} f(z)
\quad
  \text{for any $f \in {\mathcal{O}}(D \times D)$.}  
\]
\end{theorem}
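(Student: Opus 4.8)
The plan is to expand the kernel $1/Q$ as a power series in $t$ about $t=0$, integrate term by term, and identify each Taylor coefficient by means of the two-variable Cauchy integral formula. First I factor the denominator as $Q = (\zeta_1-z)(\zeta_2-z)\bigl(1 + t A\bigr)$ with $A := \frac{\zeta_1-\zeta_2}{(\zeta_1-z)(\zeta_2-z)}$. For $(z,t)\in U_D$ I choose the common contour $C_1=C_2$ to be a circle of radius $r$ about $z$ with $2|t|<r<d(z,\partial D)$, which is legitimate by Theorem \ref{thm:23032333}(1); on this contour $|\zeta_1-\zeta_2|\le 2r$ and $|\zeta_j-z|=r$, so $|tA|\le 2|t|/r<1$ uniformly. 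Hence the geometric series converges uniformly on $C_1\times C_2$ and may be integrated term by term, giving
\[
Tf(z,t)=\sum_{\ell=0}^{\infty}(-1)^{\ell}t^{\ell}\,\frac{1}{(2\pi\sqrt{-1})^{2}}\oint_{C_1}\oint_{C_2}\frac{f(\zeta_1,\zeta_2)\,(\zeta_1-\zeta_2)^{\ell}}{(\zeta_1-z)^{\ell+1}(\zeta_2-z)^{\ell+1}}\,d\zeta_1\,d\zeta_2.
\]

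Next I apply the higher-dimensional Cauchy integral formula to the holomorphic function $g_\ell(\zeta_1,\zeta_2):=f(\zeta_1,\zeta_2)(\zeta_1-\zeta_2)^{\ell}$, so that the $\ell$-th integral equals $\tfrac{1}{(\ell!)^{2}}\,\partial_{\zeta_1}^{\ell}\partial_{\zeta_2}^{\ell}g_\ell$ evaluated at $\zeta_1=\zeta_2=z$. It then remains to establish the identity
\[
\left.\partial_{\zeta_1}^{\ell}\partial_{\zeta_2}^{\ell}\bigl[f(\zeta_1,\zeta_2)(\zeta_1-\zeta_2)^{\ell}\bigr]\right|_{\zeta_1=\zeta_2=z}=(-1)^{\ell}\,\ell!\,R_{\ell}f(z),
\]
for then the coefficient of $t^{\ell}$ in $Tf(z,t)$ equals $(-1)^{\ell}\cdot\tfrac{1}{(\ell!)^{2}}\cdot(-1)^{\ell}\ell!\,R_{\ell}f(z)=\tfrac{1}{\ell!}R_{\ell}f(z)$, which is exactly the asserted expansion.

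The heart of the argument, and the step I expect to be the main obstacle, is this combinatorial identity. I would expand the left-hand side by the Leibniz rule into a double sum of terms $\binom{\ell}{a}\binom{\ell}{b}\,(\partial_{\zeta_1}^{a}\partial_{\zeta_2}^{b}f)\,(\partial_{\zeta_1}^{\ell-a}\partial_{\zeta_2}^{\ell-b}(\zeta_1-\zeta_2)^{\ell})$. The crucial observation is that $(\zeta_1-\zeta_2)^{\ell}$ vanishes to order $\ell$ on the diagonal $\zeta_1=\zeta_2$, so after evaluation at $\zeta_1=\zeta_2=z$ only the terms in which exactly $\ell$ of the derivatives fall on the factor $(\zeta_1-\zeta_2)^{\ell}$ survive; this forces $a+b=\ell$. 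A direct computation shows $\partial_{\zeta_1}^{\ell-a}\partial_{\zeta_2}^{\ell-b}(\zeta_1-\zeta_2)^{\ell}=(-1)^{a}\ell!$ on the diagonal when $a+b=\ell$, while the binomial factors combine as $\binom{\ell}{a}\binom{\ell}{\ell-a}=\binom{\ell}{a}^{2}$. Re-indexing by $j=\ell-a$ and pulling out the overall sign $(-1)^{\ell}$ then matches the coefficients $(-1)^{j}\binom{\ell}{j}^{2}$ of the Rankin--Cohen bracket in \eqref{eqn:RC}, completing the identity and hence the proof.
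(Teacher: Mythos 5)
Your proof is correct and takes essentially the same route as the paper: you expand $1/Q$ into the same power series in $t$ as the paper's \eqref{eqn:QTaylor} (via the geometric-series factorization, with a uniform-convergence estimate on the contour justifying term-by-term integration), and then identify the $t^{\ell}$-coefficient with $\frac{1}{\ell!}R_{\ell}f(z)$ using Cauchy's integral formula and the Leibniz rule. The only cosmetic difference is organizational---you apply the two-variable Cauchy formula at once and then a double Leibniz expansion with the diagonal-vanishing argument forcing $a+b=\ell$, whereas the paper iterates the one-variable Cauchy formula with a Leibniz step in between---and your combinatorics and signs, in particular the evaluation $\partial_{\zeta_1}^{\ell-a}\partial_{\zeta_2}^{\ell-b}(\zeta_1-\zeta_2)^{\ell}=(-1)^{a}\ell!$ on the diagonal, check out.
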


\begin{remark}
For $f(\zeta_1, \zeta_2)=f_1(\zeta_1) f_2(\zeta_2)$
 with some $f_1, f_2 \in {\mathcal{O}}(D)$, 
 $(R_{\ell}f)(z)$ takes the form
$
\sum_{j=0}^{\ell} (-1)^j \left(\ell \atop j\right)^2
\frac{\partial^{\ell-j}f_1(z)}{\partial z^{\ell-j}}
\frac{\partial^{j}f_2(z)}{\partial z^{j}},
$
 which is the Rankin--Cohen bidifferential operator 
 $R_{\lambda', \lambda''}^{\lambda'''}(f_1, f_2)$
 at $(\lambda',\lambda'',\lambda''')=(1,1,2+2\ell)$
 with the notation
 as in \cite[(2.1)]{KP20}.  
\end{remark}

\begin{proof}
[Proof of Theorem \ref{thm:gRC}]
By the first statement of Theorem \ref{thm:23032333}, 
 one can expand $T f(z,t)$
 into the Taylor series of $t$:
\[
   T f(z,t)=\sum_{\ell=0}^\infty t^{\ell} (T_{\ell} f)(z)
\]
with coefficients $T_{\ell}f(z) \in {\mathcal{O}}(D)$.  
Accordingly, 
we expand $Q^{-1}$ into the Taylor series of $t$:
\begin{equation}
\label{eqn:QTaylor}
  \frac 1 Q
  =
  \sum_{\ell=0}^\infty \frac{(-1)^{\ell}(\zeta_1-\zeta_2)^{\ell}t^{\ell}}{(\zeta_1-z)^{\ell+1}(\zeta_2-z)^{\ell+1}}.  
\end{equation}
An iterated use of the Cauchy integral formula
 gives an explicit formula of $(T_{\ell} f)(z)$ by 
\begin{align*}
(T_{\ell} f)(z) =& \frac{(-1)^{\ell}}{(2 \pi \sqrt{-1})^2}
              \oint_{C_1} \oint_{C_2}
              \frac{(\zeta_1-\zeta_2)^{\ell} f (\zeta_1, \zeta_2)}
                   {(\zeta_1-z)^{\ell+1}(\zeta_2-z)^{\ell+1}}
             d \zeta_1 d \zeta_2
\\
=& \frac{(-1)^{\ell}}{2 \pi \sqrt{-1}}
              \oint_{C_2} 
              \frac{(\frac{\partial}{\partial \zeta_1})^{\ell}|_{\zeta_1=z}
((\zeta_1-\zeta_2)^{\ell} f (\zeta_1, \zeta_2))}
                   {\ell ! (\zeta_2-z)^{\ell+1}}
             d \zeta_2
\\
=& \frac{1}{2 \pi \sqrt{-1}}
\sum_{j=0}^{\ell}
\frac{(-1)^j \ell!}
{(j!) ((\ell-j)!)^2}
\oint_{C_2} 
\frac{\frac{\partial^{\ell-j} f}{\partial \zeta_1^{\ell-j}} (z, \zeta_2)}
{(\zeta_2-z)^{j+1}}
             d \zeta_2
\\
=& \sum_{j=0}^{\ell}
              \left. 
              \frac{(-1)^j \ell!}{(j! (\ell-j)!)^2}
              \frac {\partial^{\ell} f(\zeta_1, \zeta_2)}{\partial \zeta_1^{\ell-j}\partial \zeta_2^j}\right|_{\zeta_1=\zeta_2=z}
=\frac 1 {\ell !}(R_{\ell} f)(z).  
\end{align*}

Hence the theorem is shown.  
\end{proof}

We are ready to prove the third statement
 of Theorem \ref{thm:23032333}, 
 which uses the property 
 that the signature of the coefficients in \eqref{eqn:RC} alternates.

\begin{proof}
[Proof of (3) in Theorem \ref{thm:23032333}]
Suppose $T f \equiv 0$ for $f \in {\mathcal{O}}(D \times D)$.  
We set $a_{i,j}(z):=
\left. \frac{\partial^{i+j} f}{\partial \zeta_1^i \partial \zeta_2^j}\right|_{\zeta_1=\zeta_2=z}$.  
We shall prove $a_{i,j}(z)\equiv 0$
 for all $i,j$ by the induction on $k:=i+j$.  
The case $k=0$ is clear 
 because $a_{0,0}(z)=f(z,z)=(T f)(z,0)$.  
Suppose now that $a_{i,j}(z) \equiv 0$ for all $i+j=k$.  
Since $\frac{d}{dz} a_{i,j}(z)= a_{i+1,j}(z)+ a_{i,j+1}(z)$,
 one has $a_{k+1-j,j}+a_{k-j,j+1}=0$
 for all $0 \le j \le k$, 
 namely, 
 $a_{k+1-j,j}=(-1)^j a_{k+1,0}$.  
In turn, 
 $(\frac{\partial}{\partial t})^{k+1}|_{t=0} T f =(\sum_{j=0}^{k+1} \left({k+1} \atop j\right)^2) a_{k+1,0}$.  
Hence $a_{k+1,0}(z) \equiv 0$, 
 and thus $a_{i,j}(z) \equiv 0$ for all $i,j$
 with $i+j=k+1$.  
Therefore,
 the holomorphic function $f(\zeta_1, \zeta_2)$ must be identically zero.  
\end{proof}

\section{Differential operator $P$ and the generating operator}
\label{sec:P}

The following differential operator
 on ${\mathbb{C}}^2$ plays a key role 
 in the analysis
 of the generating operator $T$.  

\begin{equation}
\label{eqn:23031847}
P:=
(\zeta_1-\zeta_2)^2 
\frac{\partial^2}{\partial \zeta_1 \partial \zeta_2}
-
(\zeta_1-\zeta_2) 
(\frac{\partial}{\partial \zeta_1}-\frac{\partial}{\partial \zeta_2}).  
\end{equation}

The goal of this section is to prove the following:

\begin{theorem}
\label{thm:Luminy0316}
Let $D$ be an open set in ${\mathbb{C}}$.  
For any $f \in {\mathcal{O}}(D \times D)$, 
\[
T(P f)(z,t) = -(t \frac{\partial}{\partial t})(t \frac{\partial}{\partial t}+1)
T f(z,t).  
\]
\end{theorem}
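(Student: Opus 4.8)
The plan is to compute with the integral kernel $Q^{-1}$ directly, transferring the differential operator $P$ from $f$ onto the kernel and then matching the result with $-(t\frac{\partial}{\partial t})(t\frac{\partial}{\partial t}+1)$. Throughout write $\partial_j:=\partial/\partial\zeta_j$. The first point is to justify computing under the integral sign. By the proof of Theorem~\ref{thm:23032333}(1), for each fixed $z_0\in D$ one may choose a single circle $C_1=C_2$ centered at $z_0$ whose product avoids $\mathcal{N}_{z,t}$ for all $(z,t)$ in a neighbourhood of $(z_0,0)$; on such a neighbourhood the contours are independent of $(z,t)$ and $Q^{-1}$ is jointly holomorphic near $C_1\times C_2$, so one may freely differentiate under the integral sign and, in particular, $t\frac{\partial}{\partial t}$ commutes with $\oint_{C_1}\oint_{C_2}$. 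Since the claimed identity is local in $(z,t)$, it suffices to prove it on such neighbourhoods.

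Next I would move $P$ off $f$ by integration by parts. As $C_1$ and $C_2$ are closed contours, $\oint_{C_j}\partial_j(\cdots)\,d\zeta_j=0$ for every single-valued holomorphic integrand, so no boundary terms arise: each $\partial_j$ transposes with a factor $-1$ while the polynomial coefficients transpose trivially. Denoting by $P^{t}$ the resulting formal transpose of \eqref{eqn:23031847},
\[
P^{t}=\partial_1\partial_2\bigl((\zeta_1-\zeta_2)^2\,\cdot\,\bigr)
+\partial_1\bigl((\zeta_1-\zeta_2)\,\cdot\,\bigr)
-\partial_2\bigl((\zeta_1-\zeta_2)\,\cdot\,\bigr),
\]
one obtains
\[
T(Pf)(z,t)=\frac{1}{(2\pi\sqrt{-1})^2}\oint_{C_1}\oint_{C_2}
f(\zeta_1,\zeta_2)\,P^{t}\!\left(\frac{1}{Q}\right)d\zeta_1\,d\zeta_2 .
\]

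The heart of the proof is then the pointwise kernel identity
\[
P^{t}\!\left(\frac{1}{Q}\right)
=-\Bigl(t\frac{\partial}{\partial t}\Bigr)\Bigl(t\frac{\partial}{\partial t}+1\Bigr)\frac{1}{Q},
\]
to be verified as an identity of rational functions of $(\zeta_1,\zeta_2,z,t)$. Both sides are obtained by direct differentiation from $\partial_1Q=(\zeta_2-z)+t$, $\partial_2Q=(\zeta_1-z)-t$ and $\frac{\partial}{\partial t}Q=\zeta_1-\zeta_2$, and the computation is streamlined by the identity $\bigl((\zeta_2-z)+t\bigr)\bigl((\zeta_1-z)-t\bigr)=Q-t^2$; both sides then collapse to
\[
\frac{2t(\zeta_1-\zeta_2)}{Q^{2}}-\frac{2t^{2}(\zeta_1-\zeta_2)^{2}}{Q^{3}} .
\]
I expect this rational-function verification to be the only genuine obstacle: expanding $\partial_1\partial_2\bigl((\zeta_1-\zeta_2)^2 Q^{-1}\bigr)$ produces a cluster of $Q^{-1},Q^{-2},Q^{-3}$ terms that must cancel against the first-order terms in precisely the right pattern, so careful bookkeeping is essential.

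Combining the three steps and pulling $-(t\frac{\partial}{\partial t})(t\frac{\partial}{\partial t}+1)$ back outside the integral yields
\[
T(Pf)(z,t)
=-\Bigl(t\frac{\partial}{\partial t}\Bigr)\Bigl(t\frac{\partial}{\partial t}+1\Bigr)
\frac{1}{(2\pi\sqrt{-1})^2}\oint_{C_1}\oint_{C_2}\frac{f}{Q}\,d\zeta_1\,d\zeta_2
=-\Bigl(t\frac{\partial}{\partial t}\Bigr)\Bigl(t\frac{\partial}{\partial t}+1\Bigr)Tf(z,t),
\]
which is the assertion. As a consistency check with Theorem~\ref{thm:gRC}, since $t\frac{\partial}{\partial t}$ acts on $t^{\ell}$ by the scalar $\ell$, the identity is equivalent at the level of Taylor coefficients in $t$ to $R_{\ell}(Pf)=-\ell(\ell+1)R_{\ell}f$; this offers an alternative, purely combinatorial route through the explicit formula \eqref{eqn:RC}, which I would nevertheless expect to be more laborious than the kernel computation above.
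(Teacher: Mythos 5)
Your proof is correct, and it takes a genuinely more streamlined route than the paper's. The paper also works by moving everything onto the kernel, but it never states a pointwise identity: it first computes $t\frac{\partial}{\partial t}(t\frac{\partial}{\partial t}+1)Tf$ by differentiating $Q^{-1}$ under the integral (Lemma \ref{lem:23031632}), then splits $T(Pf)$ into three integrals $I_1+I_2+I_3$ via $(\zeta_1-\zeta_2)^2=(\zeta_1^2-\zeta_1\zeta_2)+(\zeta_2^2-\zeta_1\zeta_2)$, and evaluates each piece (Propositions \ref{prop:23032266} and \ref{prop:23031775}) with a bespoke calculus: the roots $\xi_1,\xi_2$ of $Q$, the recursively defined family $H_{a,b}$ of Lemma \ref{lem:230326}, and partial-fraction splittings such as $\frac{1}{\widetilde{\zeta_2}Q^2}=-\frac{\widetilde{\zeta_1}(Q+t^2)}{t^4Q^2}+\frac{1}{t^4\widetilde{\zeta_2}}$; the match with Lemma \ref{lem:23031632} occurs only after summing the three pieces, and only under the integral sign. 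You instead transpose $P$ wholesale onto the kernel---legitimate for exactly the reason you give, since the contours are closed and $Q^{-1}$ is holomorphic in a neighbourhood of them---and reduce the theorem to a single identity of rational functions. I have checked that identity: writing $w=\zeta_1-\zeta_2$ and using $\partial_1 Q\,\partial_2 Q=Q-t^2$, both $P^{t}(Q^{-1})$ and $-(t\frac{\partial}{\partial t})(t\frac{\partial}{\partial t}+1)Q^{-1}$ equal
\[
\frac{2tw}{Q^{2}}-\frac{2t^{2}w^{2}}{Q^{3}}
=\frac{2t(\zeta_1-\zeta_2)(\zeta_1-z)(\zeta_2-z)}{Q^{3}},
\]
the last expression recovering the paper's Lemma \ref{lem:23031632} as a byproduct. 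Your route buys brevity and a conceptual explanation ($T$ intertwines $P$ with $-t\frac{\partial}{\partial t}(t\frac{\partial}{\partial t}+1)$ because the formal transpose of $P$ acts on the Cauchy-type kernel $Q^{-1}$ exactly as that Euler operator does), and it dispenses with the $\xi_j$ and $H_{a,b}$ machinery entirely; the paper's longer route yields explicit closed formulas for each $I_j$ separately. Two small points to tighten in a final write-up: actually carry out the expansion of $\partial_1\partial_2\bigl((\zeta_1-\zeta_2)^2Q^{-1}\bigr)$, since that computation is the crux and is where sign errors would hide; and when transposing the mixed second-order term, note the interchange of the two contour integrals between the two integrations by parts (harmless, as the integrand is continuous on the compact torus $C_1\times C_2$).
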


One derives from Theorem \ref{thm:Luminy0316}
 that the set of eigenvalues of $P$ is discrete:

\begin{corollary}
[Eigenvalues of $P$]
\label{cor:Peigen}
Let $D$ be a connected open set in ${\mathbb{C}}$.  
If there is a non-zero function $f \in {\mathcal{O}}(D \times D)$
 satisfying $P f = \lambda f$ for some $\lambda \in {\mathbb{C}}$, 
then $\lambda$ is of the form $-\ell (\ell+1)$
 for some $\ell \in {\mathbb{N}}$.  
\end{corollary}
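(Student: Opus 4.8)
The plan is to apply the integral operator $T$ to the eigenvalue equation and thereby transport the spectral information to the $t$-variable, where the operator $-(t\frac{\partial}{\partial t})(t\frac{\partial}{\partial t}+1)$ has a transparent diagonal action on monomials. First I would use the linearity of $T$ together with the hypothesis $Pf=\lambda f$ to write $T(Pf)=\lambda\,Tf$. Combining this with Theorem \ref{thm:Luminy0316} yields the single identity
\[
-(t\tfrac{\partial}{\partial t})(t\tfrac{\partial}{\partial t}+1)\,Tf(z,t)=\lambda\,Tf(z,t),
\]
valid on the domain $U_D$ of \eqref{eqn:UD} where $Tf$ is holomorphic by Theorem \ref{thm:23032333}(1).

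Next I would insert the Taylor expansion $Tf(z,t)=\sum_{\ell=0}^{\infty}\frac{t^{\ell}}{\ell!}R_{\ell}f(z)$ furnished by Theorem \ref{thm:gRC}. Since the Euler operator $t\frac{\partial}{\partial t}$ multiplies the coefficient of $t^{\ell}$ by $\ell$, the operator $-(t\frac{\partial}{\partial t})(t\frac{\partial}{\partial t}+1)$ acts on that coefficient by the scalar $-\ell(\ell+1)$; this term-by-term differentiation is legitimate because $Tf$ is holomorphic in $(z,t)$ on $U_D$, so its Taylor series in $t$ converges and may be differentiated term by term. Comparing the coefficients of $t^{\ell}$ on the two sides then gives, for every $\ell\in\mathbb{N}$ and every $z\in D$,
\[
\bigl(-\ell(\ell+1)-\lambda\bigr)\,R_{\ell}f(z)=0.
\]

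To conclude I would exploit that $\ell\mapsto-\ell(\ell+1)$ is strictly decreasing, hence injective, on $\mathbb{N}$. If $\lambda$ were different from $-\ell(\ell+1)$ for every $\ell$, then the scalar factor $-\ell(\ell+1)-\lambda$ would be nonzero for all $\ell$, forcing $R_{\ell}f\equiv 0$ for all $\ell$ and therefore $Tf\equiv 0$. The injectivity of $T$ (Theorem \ref{thm:23032333}(3)) would then give $f\equiv 0$, contradicting the assumption that $f$ is non-zero. Hence $\lambda=-\ell(\ell+1)$ for some $\ell\in\mathbb{N}$, which is the assertion; the connectedness of $D$ ensures that a single eigenvalue $\lambda$ is attached to the eigenfunction $f$.

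The main obstacle I anticipate lies not in the elementary algebra of the Euler operator, which is routine, but in making sure the two analytic inputs are used on a common footing: namely that the intertwining identity of Theorem \ref{thm:Luminy0316} holds as an equality of holomorphic functions on $U_D$ (so that comparison of Taylor coefficients in $t$ is meaningful), and that the injectivity statement of Theorem \ref{thm:23032333}(3) applies to the function $Tf$ produced here. Both facts are already established in the excerpt, so once they are invoked the discreteness of the spectrum of $P$ follows immediately; the only point requiring a little care is the legitimacy of the term-by-term extraction of coefficients, which is guaranteed by the holomorphy of $Tf$ on $U_D$.
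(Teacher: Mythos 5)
Your proof is correct and follows essentially the same route as the paper's: both apply Theorem \ref{thm:Luminy0316} to convert $Pf=\lambda f$ into $\vartheta_t(\vartheta_t+1)Tf=-\lambda\,Tf$, use the holomorphy of $Tf$ near $t=0$ to read off that the only possible eigenvalues of $\vartheta_t(\vartheta_t+1)$ on the Taylor expansion in $t$ are $\ell(\ell+1)$, and invoke the injectivity of $T$ (Theorem \ref{thm:23032333}(3)) to rule out $Tf\equiv 0$. The only cosmetic differences are that you phrase the coefficient comparison via Theorem \ref{thm:gRC} and argue by contradiction, whereas the paper uses injectivity upfront; note also that connectedness of $D$ is not what attaches a single $\lambda$ to $f$ (that is in the hypothesis) but rather what underlies the injectivity of $T$.
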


For $\ell \in {\mathbb{N}}$, 
 we consider the space of all eigenfunctions:
\begin{equation}
\label{eqn:Sol}
  \Sol(D \times D)_{\ell}
  :=
  \{f \in {\mathcal{O}}(D \times D):
   P f = -\ell(\ell+1)f \}.  
\end{equation}

We shall see in Corollary \ref{cor:dimSol}
 that $\Sol(D \times D)_{\ell}$ is infinite-dimensional 
 for any $\ell \in {\mathbb{N}}$
 and for any non-empty open subset $D$.  

\begin{remark}
{\rm{(1)}}\enspace
In Theorem \ref{thm:Hardy}, 
 we shall prove that 
$P$ defines a self-adjoint operator 
 on the completed tensor product of two Hardy spaces.  
\newline
{\rm{(2)}}\enspace
Taking this opportunity, 
 we would like to point out
 that the first term
 of the differential operator
 $P_{\lambda', \lambda''}$
 in \cite[(2.31)]{KP20}
 was wrongly stated:
the correct formula is 
\[
P_{\lambda', \lambda''}
=
(\zeta_1-\zeta_2)^2 \frac{\partial^2}{\partial \zeta_1 \partial \zeta_2}
-
(\zeta_1-\zeta_2)
(\lambda'' \frac{\partial}{\partial \zeta_1}
-\lambda' \frac{\partial}{\partial \zeta_2}). 
\]
All the theorems
 involving $P_{\lambda', \lambda''}$ valid with this definition.  
\end{remark}

\begin{proof}
[Proof of Corollary \ref{cor:Peigen}]
Suppose $P f=\lambda f$ with $f \not \equiv 0$.  
Then $T f \not \equiv 0$
 because $T$ is injective 
 by Theorem \ref{thm:23032333}.  
By Theorem \ref{thm:Luminy0316}, 
 one has 
\[
  \vartheta_t (\vartheta_t+1) T f
  =
  -T(P f)
  =
  -\lambda T f, 
\]
where $\vartheta_t$ denotes the Euler homogeneity operator
 $t \frac{\partial}{\partial t}$.  
We observe 
 that $\vartheta_t (\vartheta_t +1)t^{\ell} = \ell(\ell+1) t^{\ell}$
 for every $\ell \in{\mathbb{N}}$.  
Since $T f(z, t)$ is holomorphic in a neighbourhood of $t=0$, 
 possible eigenvalues
 of $\vartheta_t(\vartheta_t+1)$ are of the form 
 $\ell(\ell+1)$ for some $\ell \in {\mathbb{N}}$, 
 and the corresponding eigenfunctions are of the form
 $t^{\ell}\varphi(z)$ 
for some holomorphic function $\varphi(z) \in {\mathcal{O}}(D)$.  
Thus the corollary is proved.  
\end{proof}

The following statement is clear from the above proof.  

\begin{corollary}
\label{cor:Luminy0316}
Let $\ell \in {\mathbb{N}}$.  
Then the following two conditions on $f \in {\mathcal{O}}(D \times D)$ are equivalent:
\par\noindent
{\rm{(i)}}\enspace
$f \in \Sol(D \times D)_{\ell}$, 
\par\noindent
{\rm{(ii)}}\enspace
$T f (z,t)$ is of the form $t^{\ell} \varphi (z)$
 for some $\varphi \in {\mathcal{O}}(D)$.    
\end{corollary}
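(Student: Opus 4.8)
The plan is to read off both implications directly from the intertwining identity of Theorem~\ref{thm:Luminy0316}, combined with the injectivity of $T$ established in Theorem~\ref{thm:23032333}(3). Throughout I write $\vartheta_t:=t\frac{\partial}{\partial t}$, and I record the elementary spectral fact, already used in the proof of Corollary~\ref{cor:Peigen}, that $\vartheta_t(\vartheta_t+1)$ acts on a monomial $t^{m}$ by the scalar $m(m+1)$, together with the observation that for $m,\ell\in{\mathbb N}$ the equality $m(m+1)=\ell(\ell+1)$ forces $m=\ell$ since $m\mapsto m(m+1)$ is strictly increasing on ${\mathbb N}$.

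For the implication (i)$\Rightarrow$(ii), I would start from $Pf=-\ell(\ell+1)f$ and apply Theorem~\ref{thm:Luminy0316} to obtain
\[
  \vartheta_t(\vartheta_t+1)\,Tf
  =-T(Pf)
  =\ell(\ell+1)\,Tf .
\]
Since $Tf(z,t)$ is holomorphic in a neighbourhood of $t=0$ by Theorem~\ref{thm:23032333}(1), I expand it as $Tf(z,t)=\sum_{m=0}^{\infty}t^{m}(T_{m}f)(z)$ with $T_{m}f\in{\mathcal O}(D)$ and compare coefficients: the displayed eigenvalue relation yields $m(m+1)(T_{m}f)(z)=\ell(\ell+1)(T_{m}f)(z)$ for every $m$, whence $T_{m}f\equiv 0$ for all $m\ne\ell$ by the spectral fact above. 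Therefore $Tf(z,t)=t^{\ell}\varphi(z)$ with $\varphi:=T_{\ell}f\in{\mathcal O}(D)$, which is exactly (ii).

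For the converse (ii)$\Rightarrow$(i), I would substitute $Tf(z,t)=t^{\ell}\varphi(z)$ into Theorem~\ref{thm:Luminy0316}. Because $\vartheta_t(\vartheta_t+1)$ scales $t^{\ell}$ by $\ell(\ell+1)$, this gives
\[
  T(Pf)=-\vartheta_t(\vartheta_t+1)\,Tf=-\ell(\ell+1)\,Tf=T\bigl(-\ell(\ell+1)f\bigr).
\]
Invoking the injectivity of $T$ from Theorem~\ref{thm:23032333}(3), I conclude $Pf=-\ell(\ell+1)f$, that is, $f\in\Sol(D\times D)_{\ell}$.

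I do not expect a genuine obstacle here, as everything is a formal consequence of the two earlier results; the only points requiring a little care are that the passage from the eigenvalue relation to the monomial form in (i)$\Rightarrow$(ii) uses both the holomorphy of $Tf$ near $t=0$ (to legitimize the term-by-term comparison of Taylor coefficients) and the injectivity of $m\mapsto m(m+1)$ on ${\mathbb N}$, while the converse rests entirely on the injectivity of $T$.
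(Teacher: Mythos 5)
Your proof is correct and follows essentially the same route as the paper: the paper declares this corollary ``clear from the above proof'' of Corollary~\ref{cor:Peigen}, and that implicit argument is precisely what you spell out --- the intertwining identity of Theorem~\ref{thm:Luminy0316}, the spectral analysis of $\vartheta_t(\vartheta_t+1)$ on the Taylor expansion in $t$, and the injectivity of $T$ from Theorem~\ref{thm:23032333}(3) for the converse direction.
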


The rest of the section is devoted to the proof of Theorem \ref{thm:Luminy0316}
 by comparing the integral expressions
 of $T(P f)$ and $\vartheta_t(\vartheta_t+1)T f$.

The following formula for $\vartheta_t (\vartheta_t+1)T f$ is an immediate consequence
 of the definition \eqref{eqn:Luminy230314} 
 of the generating operator $T$.  
For the rest of the paper, 
 we omit writing the contours $C_1$ and $C_2$ in the integrals for simplicity.  

\begin{lemma}
\label{lem:23031632}
For any $f \in {\mathcal{O}}(D \times D)$, 
 one has
\[
\vartheta_t (\vartheta_t+1) T f(z,t)
=\frac{-2}{(2\pi \sqrt{-1})^2}
\oint \oint
\frac{(\zeta_1-\zeta_2)(\zeta_1-z)(\zeta_2-z)t}{Q^3}f d \zeta_1 d \zeta_2.  
\]
\end{lemma}

It is more involved to find the integral expression of $T(P f)$.  
For this, 
 we set
\begin{align*}
I_j(f)
:=& \frac{1}{(2 \pi \sqrt{-1})^2} \oint\oint \frac {\zeta_j^2 - \zeta_1 \zeta_2}Q 
                     \frac{\partial^2 f}{\partial \zeta_1 \partial \zeta_2} d \zeta_1 d \zeta_2
\quad
\text{for $j=1,2$, }
\\
I_3(f):=& -\frac {1}{(2\pi \sqrt{-1})^2}
       \oint \oint \frac{(\zeta_1-\zeta_2) (\frac{\partial}{\partial \zeta_1} - \frac{\partial}{\partial \zeta_2})f} Q d \zeta_1 d \zeta_2.  
\end{align*}

By the definition \eqref{eqn:23031847} of $P$, 
one has 
\[
   T(P f)= I_1(f)+I_2(f)+I_3(f).  
\]
In view of Lemma \ref{lem:23031632}, 
 Theorem \ref{thm:Luminy0316} will be derived from
 the following two propositions.  

\begin{proposition}
\label{prop:23032266}
Let $\varepsilon (1):=-1$
 and $\varepsilon(2)=1$.  
For $j=1,2$, 
one has 
\begin{equation}
  I_j(f)
 = \frac{1}{(2 \pi \sqrt{-1})^2}
   \oint \oint \frac{\varepsilon(j)(\zeta_j-z)^2 + 2 t (\zeta_1-z)(\zeta_2-z)\zeta_j}{Q^2}  
                f (\zeta_1, \zeta_2)d \zeta_1 d \zeta_2.  
\label{eqn:23032266}
\end{equation}
\end{proposition}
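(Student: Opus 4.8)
The plan is to move the two derivatives in $\partial^2 f/\partial\zeta_1\partial\zeta_2$ off $f$ and onto the rational kernel $(\zeta_j^2-\zeta_1\zeta_2)/Q$ by integration by parts in the contour variables. Because $C_1$ and $C_2$ are closed and, for $(z,t)\in U_D$, avoid the polar set $\mathcal{N}_{z,t}=\{Q=0\}$, the integrand is for each fixed value of the remaining variable a single-valued meromorphic function whose poles lie off the contour; hence $\oint_{C_k}\partial_{\zeta_k}h\,d\zeta_k=0$ for every such $h$, and no boundary terms appear. First I would transfer a single derivative, say $\partial_{\zeta_2}$ in the case $j=1$, which replaces the integrand by $-\partial_{\zeta_2}\!\big((\zeta_1^2-\zeta_1\zeta_2)/Q\big)\,\partial_{\zeta_1}f$.

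The key algebraic input at this stage is the factorization
\[
(\zeta_1-z)^2=Q+(\zeta_1-\zeta_2)(\zeta_1-z-t),
\]
together with its companion $(\zeta_2-z)^2=Q-(\zeta_1-\zeta_2)(\zeta_2-z+t)$, both immediate from the definition \eqref{eqn:Q} of $Q$. Substituting the first identity collapses the differentiated kernel and lowers the pole order by one: one finds $\partial_{\zeta_2}\big((\zeta_1^2-\zeta_1\zeta_2)/Q\big)=-(\zeta_1-z)^2\zeta_1/Q^2$, so that at this point $I_1(f)$ is expressed as $\frac{1}{(2\pi\sqrt{-1})^2}\oint\oint\frac{(\zeta_1-z)^2\zeta_1}{Q^2}\,\partial_{\zeta_1}f\,d\zeta_1 d\zeta_2$, already with the denominator $Q^2$ but still carrying one derivative of $f$.

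The main obstacle is to remove this surviving derivative without destroying the $Q^2$ denominator: a naive second integration by parts differentiates $Q^{-2}$ and reintroduces a third-order pole $Q^{-3}$. The resolution should exploit the fact that $Q$ has degree one in each of $\zeta_1,\zeta_2$ separately, so that $\partial_{\zeta_k}Q$ is independent of $\zeta_k$ and one has $\partial_{\zeta_k}Q/Q^3=-\tfrac12\,\partial_{\zeta_k}(Q^{-2})$; feeding this back into a further integration by parts lowers the pole order to two again, and collecting the resulting terms is what should produce the numerator $\varepsilon(j)(\zeta_j-z)^2+2t(\zeta_1-z)(\zeta_2-z)\zeta_j$. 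I expect the delicate point to be the bookkeeping of exactly which contributions survive under the double contour integral during this reduction, since the intermediate $Q^{-3}$ terms do not vanish pointwise and only their combined effect on holomorphic test functions should match the claimed kernel.

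Finally, the case $j=2$ need not be treated separately: the involution $(\zeta_1,\zeta_2,t)\mapsto(\zeta_2,\zeta_1,-t)$ leaves $Q$ invariant, interchanges the two numerators $\zeta_1^2-\zeta_1\zeta_2$ and $\zeta_2^2-\zeta_1\zeta_2$, and swaps the roles of the two signs $\varepsilon(1)=-1$ and $\varepsilon(2)=1$, so the identity for $j=2$ follows from that for $j=1$ by this symmetry (equivalently, the computation above goes through verbatim with the companion factorization).
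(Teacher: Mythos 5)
The first half of your argument is correct, and is in fact more direct than the paper's own route: one checks, exactly as you say, that
$\partial_{\zeta_2}\bigl((\zeta_1^2-\zeta_1\zeta_2)/Q\bigr)=-(\zeta_1-z)^2\zeta_1/Q^2$, whence
$(2\pi\sqrt{-1})^2I_1(f)=\oint\oint(\zeta_1-z)^2\zeta_1\,Q^{-2}\,\partial_{\zeta_1}f\,d\zeta_1\,d\zeta_2$.
(The paper reaches an equivalent intermediate point via the recurrence family $H_{a,b}$ of Lemma \ref{lem:230326} and the partial-fraction identity $-\widetilde{\zeta_1}\widetilde{\zeta_2}(Q+t^2)+Q^2=t^4$.) The genuine gap is the second half, which you leave as a hope: ``collecting the resulting terms is what should produce the numerator'' is precisely the step that must be carried out, and when one carries it out it does \emph{not} produce the claimed kernel. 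Performing the $\zeta_1$-integration by parts and using your factorization $(\zeta_1-z)^2=Q+(\zeta_1-\zeta_2)\widetilde{\zeta_1}$ together with $\widetilde{\zeta_1}\widetilde{\zeta_2}=Q-t^2$ gives
\begin{equation*}
(2\pi\sqrt{-1})^2 I_1(f)
=\oint\oint\Bigl(\frac{-(\zeta_1-z)^2+2t\zeta_1}{Q^2}
-\frac{2t^2(\zeta_1-\zeta_2)\zeta_1}{Q^3}\Bigr)f\,d\zeta_1\,d\zeta_2 ,
\end{equation*}
whose kernel coincides, as a rational function, with $-(\zeta_1-z)^2Q^{-2}+2t(\zeta_1-z)(\zeta_2-z)\zeta_1\,Q^{-3}$. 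The $Q^{-3}$ term is irremovable; your proposed device $Q^{-3}=-\tfrac12(\partial_{\zeta_1}Q)^{-1}\partial_{\zeta_1}(Q^{-2})$, fed into one more integration by parts, merely re-creates a derivative of $f$ and puts you back where you started.

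Indeed, no bookkeeping can succeed here, because \eqref{eqn:23032266} is false as printed (the display contains misprints). Test it on $f\equiv1$: then $I_j(1)=0$, whereas iterated residue evaluation of the printed right-hand side gives $2t(z+2t)$ for $j=1$ and $2t(z-2t)$ for $j=2$. What is true---and what the final chain of equalities in the paper's own proof actually establishes---is
\begin{equation*}
I_j(f)=\frac{1}{(2\pi\sqrt{-1})^2}\oint\oint
\Bigl(\frac{-(\zeta_j-z)^2}{Q^2}
-\varepsilon(j)\,\frac{2t(\zeta_1-z)(\zeta_2-z)\zeta_j}{Q^3}\Bigr)f\,d\zeta_1\,d\zeta_2 ,
\end{equation*}
which is exactly what the proof of Theorem \ref{thm:Luminy0316} requires: the $Q^{-2}$ parts of $I_1+I_2$ cancel against $I_3$ (Proposition \ref{prop:23031775}), and the $Q^{-3}$ parts sum to the kernel of Lemma \ref{lem:23031632}. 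Your computation, once completed, proves precisely this corrected identity for $j=1$, and your involution $(\zeta_1,\zeta_2,t)\mapsto(\zeta_2,\zeta_1,-t)$ does carry the corrected $j=1$ kernel to the corrected $j=2$ kernel. (Applied to the printed formulas, the same involution sends the $j=1$ display to the \emph{negative} of the $j=2$ display---one more symptom that the misprint, not your symmetry, is at fault.) So your approach is sound and arguably simpler than the paper's, but the proof is incomplete as written, and its stated target must be replaced by the corrected formula above.
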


\begin{proposition}
\label{prop:23031775}
\begin{equation*}
I_3(f)= \frac {1}{(2\pi \sqrt{-1})^2}
       \oint \oint \frac{(\zeta_1-z)^2 +(\zeta_2-z)^2}{Q^2} f(\zeta_1, \zeta_2) d \zeta_1 d \zeta_2.  
\end{equation*}
\end{proposition}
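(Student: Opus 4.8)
The plan is to reduce $I_3(f)$, which involves first-order derivatives of $f$, to an integral against $f$ alone by integrating by parts in each of the two contour variables. First I would split the operator $\frac{\partial}{\partial \zeta_1} - \frac{\partial}{\partial \zeta_2}$ and write $I_3(f)$ as the difference of two double contour integrals, the first carrying $\frac{\partial f}{\partial \zeta_1}$ and the second $\frac{\partial f}{\partial \zeta_2}$, each against the kernel $\frac{\zeta_1-\zeta_2}{Q}$.

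Next I would integrate the first integral by parts in $\zeta_1$ (with $\zeta_2$ fixed) and the second in $\zeta_2$ (with $\zeta_1$ fixed), so that in each case the derivative is transferred from $f$ onto the kernel. Because $C_1$ and $C_2$ are closed contours on which the integrand is single-valued and holomorphic---the zero set ${\mathcal{N}}_{z,t}$ of $Q$ being disjoint from $C_1 \times C_2$ for $2|t|<d(z,\partial D)$, as in the proof of Theorem \ref{thm:23032333}(1)---the boundary contributions vanish. This yields an integral of $f$ against $\frac{\partial}{\partial \zeta_1}\bigl(\frac{\zeta_1-\zeta_2}{Q}\bigr) - \frac{\partial}{\partial \zeta_2}\bigl(\frac{\zeta_1-\zeta_2}{Q}\bigr)$.

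Then I would differentiate with the quotient rule, using $\frac{\partial Q}{\partial \zeta_1} = (\zeta_2-z)+t$ and $\frac{\partial Q}{\partial \zeta_2} = (\zeta_1-z)-t$. Over the common denominator $Q^2$ the two $\frac{1}{Q}$ pieces combine into $\frac{2Q}{Q^2}$, and the numerator becomes $2Q - (\zeta_1-\zeta_2)\bigl(\frac{\partial Q}{\partial \zeta_1}-\frac{\partial Q}{\partial \zeta_2}\bigr)$. Since $\frac{\partial Q}{\partial \zeta_1}-\frac{\partial Q}{\partial \zeta_2} = -(\zeta_1-\zeta_2)+2t$, the $t$-dependent contributions cancel against the term $t(\zeta_1-\zeta_2)$ inside $Q$, leaving $2(\zeta_1-z)(\zeta_2-z)+(\zeta_1-\zeta_2)^2$. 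A final application of the polarization identity $2ab+(a-b)^2=a^2+b^2$ with $a=\zeta_1-z$, $b=\zeta_2-z$ turns this into exactly $(\zeta_1-z)^2+(\zeta_2-z)^2$, which is the claimed numerator.

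The computation is routine once set up; the only step requiring genuine care is the justification of the integration by parts, namely that the kernel $\frac{(\zeta_1-\zeta_2)f}{Q}$ is holomorphic and single-valued in a neighbourhood of each closed contour so that no boundary term survives. This rests precisely on the non-vanishing of $Q$ on $C_1\times C_2$ established earlier, and it is the point I would state most carefully before letting the algebra run.
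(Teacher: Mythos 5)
Your proof is correct and takes essentially the same route as the paper: the paper's prepared formula \eqref{eqn:23031838}, applied there to the two halves of $I_3(f)$, is precisely the contour integration by parts you perform (legitimate, as you note, because $Q\neq 0$ on a neighbourhood of $C_1\times C_2$, so the integrand is holomorphic and single-valued near each closed contour and exact differentials integrate to zero), and your closing identity $2Q-(\zeta_1-\zeta_2)\bigl(\tfrac{\partial Q}{\partial\zeta_1}-\tfrac{\partial Q}{\partial\zeta_2}\bigr)=(\zeta_1-z)^2+(\zeta_2-z)^2$ is the same algebra the paper records in the form $(-Q+\widetilde{\zeta_2}\zeta_1-\widetilde{\zeta_2}\zeta_2)-(\widetilde{\zeta_1}\zeta_1+Q-\widetilde{\zeta_1}\zeta_2)=-(\zeta_1-z)^2-(\zeta_2-z)^2$. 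The only difference is presentational: you differentiate the kernel $\tfrac{\zeta_1-\zeta_2}{Q}$ directly by the quotient rule, whereas the paper routes this step through \eqref{eqn:23031838}, which it obtains from the machinery of Lemma \ref{lem:230326}.
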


For the proof of Propositions \ref{prop:23032266} and \ref{prop:23031775}, 
 we need some preparations.  
We define $\xi_1 \equiv \xi_1(\zeta_2)$ and $\xi_2 \equiv \xi_2(\zeta_1)$
 by 
\begin{equation}
\label{eqn:Xi}
 \xi_1:=\frac{(\zeta_2-z)z+t \zeta_2}
             {\zeta_2-z+t}, 
\qquad
 \xi_2 :=\frac{(\zeta_1-z)z-t \zeta_1}
             {\zeta_1-z-t}.  
\end{equation}
Then one has 
$
   Q(\xi_1, \zeta_2)=Q(\zeta_1, \xi_2)=0, 
$
 where we recall 
\[
  Q \equiv Q(\zeta_1, \zeta_2)
  =(\zeta_1-z)(\zeta_2-z)+t(\zeta_1-\zeta_2).  
\]
We set
$\widetilde{\zeta_1}:=\zeta_1-z-t=\frac{\partial Q}{\partial \zeta_2}$
 and $\widetilde{\zeta_2}:=\zeta_2-z+t=\frac{\partial Q}{\partial \zeta_1}$.  
One has
\[
  Q =\widetilde{\zeta_1} \widetilde{\zeta_2}+t^2
    =\widetilde{\zeta_2}(\zeta_1-\xi_1)=\widetilde{\zeta_1}(\zeta_2-\xi_2).  
\]

We list some convenient formul{\ae}
 which are direct from the definition.  
\begin{alignat}{3}
\label{eqn:230317100}
\xi_1-\zeta_2=&\frac{-(\zeta_2-z)^2}{\widetilde{\zeta_2}}, 
\quad
&&\zeta_1-\xi_1=\frac{Q}{\widetilde{\zeta_2}}, 
\quad
&&\xi_1-z=\frac{t(\zeta_2-z)}{\widetilde{\zeta_2}}.  
\\
\label{eqn:23031810}
\xi_2-\zeta_1=&-\frac{(\zeta_1-z)^2}{\widetilde{\zeta_1}}, 
\quad
&&\zeta_2-\xi_2=\frac{Q}{\widetilde {\zeta_1}}, 
\quad
&&\xi_2-z=\frac{-t(\zeta_1-z)}{\widetilde{\zeta_1}}.  
\end{alignat}

In the one-variable case, 
 the Cauchy integral formula implies
\[
   \frac{1}{\ell !}
   \oint \frac{\varphi^{(k)}(\zeta)}{(\zeta-z)^{\ell+1}} d \zeta
   =
   \frac{1}{(\ell+k) !}
   \oint \frac{\varphi(\zeta)}{(\zeta-z)^{\ell+k+1}} d \zeta
\]
for a holomorphic function $\varphi(\zeta)$
 and for any $\ell, k \in {\mathbb{N}}$.  
However, 
 in our setting,
 since $Q$ is an irreducible polynomial
 of the two variables $\zeta_1$ and $\zeta_2$ for $t \ne 0$, 
 the integration formul{\ae} for derivatives
 of a holomorphic function $F(\zeta_1,\zeta_2)$
 against the integral kernel $Q^{-1}$ 
 or its power 
 are not so simple
 as in the one-variable case.  
We establish such formul{\ae} for derivatives 
 against the integral kernel $\zeta_j^a Q^{-b}$
 ($a,b \in {\mathbb{N}}$) as below.

For $a, b \in {\mathbb{N}}$, 
 we define functions 
 $H_{a,b}(\zeta_1, \zeta_2)$
 inductively by the following recurrence relation
\begin{equation}
\label{eqn:Habdef}
   H_{a,b}:=(\xi_1-z)^a H_{0,b} + \widetilde{\zeta_2}^{-1} Q
   \sum_{i=0}^{a-1} (\xi_1-z)^iH_{a-1-i,b-1}, 
\end{equation}
with initial terms
\begin{equation}
\label{eqn:23031774}
\text{$H_{a,0}:=0$\quad and \quad$H_{0,b}:=b \widetilde{\zeta_2}$}.  
\end{equation}

\begin{lemma}
\label{lem:230326}
For any $a, b \in {\mathbb{N}}$, 
 one has
\begin{equation}
\label{eqn:Hab}
  \oint\frac{(\zeta_1-z)^a}{Q^b}\frac{\partial F}{\partial \zeta_1}d \zeta_1
=
  \oint \frac{H_{a,b}}{Q^{b+1}}F d \zeta_1.  
\end{equation}
\end{lemma}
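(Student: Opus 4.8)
The plan is to prove the identity by integration by parts in the single variable $\zeta_1$ (with $\zeta_2$ fixed), exploiting that $Q$ is \emph{linear} in $\zeta_1$: indeed $\frac{\partial Q}{\partial \zeta_1}=\widetilde{\zeta_2}$ is independent of $\zeta_1$ and $Q=\widetilde{\zeta_2}(\zeta_1-\xi_1)$. Write $J_{a,b}(F):=\oint \frac{(\zeta_1-z)^a}{Q^b}\frac{\partial F}{\partial \zeta_1}\,d\zeta_1$ for the left-hand side, so that the claim is $J_{a,b}(F)=\oint \frac{H_{a,b}}{Q^{b+1}}F\,d\zeta_1$. The engine of the proof is the elementary decomposition $\zeta_1-z=(\xi_1-z)+\frac{Q}{\widetilde{\zeta_2}}$, which is nothing but $\zeta_1-z=(\xi_1-z)+(\zeta_1-\xi_1)$ combined with the second formula in \eqref{eqn:230317100}. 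Multiplying this identity by $(\zeta_1-z)^{a-1}Q^{-b}\frac{\partial F}{\partial \zeta_1}$ and integrating over $C_1$ (recalling that $\xi_1$ and $\widetilde{\zeta_2}$ do not depend on $\zeta_1$) yields the two-term recurrence
\[
J_{a,b}(F)=(\xi_1-z)\,J_{a-1,b}(F)+\widetilde{\zeta_2}^{-1}\,J_{a-1,b-1}(F).
\]

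For the base cases I would argue directly, the key point being that for a single-valued meromorphic function $G$ of $\zeta_1$ one has $\oint \frac{\partial G}{\partial \zeta_1}\,d\zeta_1=0$, since it is the integral of an exact form over the closed contour $C_1$. When $b=0$, integration by parts gives $J_{a,0}(F)=-a\oint (\zeta_1-z)^{a-1}F\,d\zeta_1$, whose integrand is now holomorphic inside $C_1$, so the integral vanishes; this agrees with $H_{a,0}=0$ from \eqref{eqn:23031774}. When $a=0$, the relation $\oint \frac{\partial}{\partial \zeta_1}\bigl(Q^{-b}F\bigr)\,d\zeta_1=0$ rearranges to $J_{0,b}(F)=b\widetilde{\zeta_2}\oint Q^{-(b+1)}F\,d\zeta_1$, which is precisely $\oint \frac{H_{0,b}}{Q^{b+1}}F\,d\zeta_1$ with $H_{0,b}=b\widetilde{\zeta_2}$.

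With the base cases in hand I would run a strong induction on $a$, treating $a=0$ (all $b$) and $b=0$ (all $a$) as anchors so that in the inductive step one may assume $a\ge 1$ and $b\ge 1$. Substituting the inductive hypotheses $J_{a-1,b}(F)=\oint \frac{H_{a-1,b}}{Q^{b+1}}F\,d\zeta_1$ and $J_{a-1,b-1}(F)=\oint \frac{H_{a-1,b-1}}{Q^{b}}F\,d\zeta_1$ into the two-term recurrence and bringing both terms over the common denominator $Q^{b+1}$ gives
\[
J_{a,b}(F)=\oint \frac{(\xi_1-z)H_{a-1,b}+\widetilde{\zeta_2}^{-1}Q\,H_{a-1,b-1}}{Q^{b+1}}\,F\,d\zeta_1 .
\]
The proof is then complete once the bracketed kernel is identified with $H_{a,b}$.

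This last identification is where the bookkeeping lives and is the only genuine obstacle. The function $H_{a,b}$ of \eqref{eqn:Habdef} is defined by a recurrence that collapses the index $a$ to $0$ all at once, whereas the reduction above produces the two-term relation $H_{a,b}=(\xi_1-z)H_{a-1,b}+\widetilde{\zeta_2}^{-1}Q\,H_{a-1,b-1}$. I would reconcile the two by unrolling this two-term relation in $a$: iterating it $a$ times strips off the geometric factor $(\xi_1-z)^a H_{0,b}$ and accumulates exactly the sum $\widetilde{\zeta_2}^{-1}Q\sum_{i=0}^{a-1}(\xi_1-z)^i H_{a-1-i,b-1}$ of \eqref{eqn:Habdef}, so that \eqref{eqn:Habdef} is merely the closed form of the two-term recurrence and the induction closes. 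The point requiring the most care throughout is tracking the powers of $Q$: the term $J_{a-1,b-1}$ carries denominator $Q^{b}$, and it is the explicit factor $Q$ in $\widetilde{\zeta_2}^{-1}Q$ that promotes it to the common denominator $Q^{b+1}$, matching the shift of $b$ in the definition of $H_{a,b}$.
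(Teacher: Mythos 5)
Your proof is correct, and it runs on the same engine as the paper's --- the $\zeta_1$-independence of $\xi_1$ and $\widetilde{\zeta_2}$ together with the factorization $Q=\widetilde{\zeta_2}(\zeta_1-\xi_1)$ --- but it is organized genuinely differently. The paper expands $(\zeta_1-z)^a$ all at once via the difference-of-powers identity,
\[
\frac{(\zeta_1-z)^a}{Q^b}=\frac{(\xi_1-z)^a}{Q^b}
+\frac{1}{\widetilde{\zeta_2}}\sum_{i=0}^{a-1}\frac{(\xi_1-z)^i(\zeta_1-z)^{a-1-i}}{Q^{b-1}},
\]
handles $a=0$ by the Cauchy formula for derivatives, and then inducts on $b$; with that decomposition the induction step reproduces the defining recurrence \eqref{eqn:Habdef} verbatim, so no reconciliation of recurrences is ever needed. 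You instead use the one-step splitting $\zeta_1-z=(\xi_1-z)+Q/\widetilde{\zeta_2}$, obtain the two-term relation $J_{a,b}=(\xi_1-z)J_{a-1,b}+\widetilde{\zeta_2}^{-1}J_{a-1,b-1}$, and induct on $a$ alone; the price is the extra step of identifying $(\xi_1-z)H_{a-1,b}+\widetilde{\zeta_2}^{-1}Q\,H_{a-1,b-1}$ with $H_{a,b}$. That identification is correct, and rather than ``unrolling'' the two-term relation it is cleanest to run it in the opposite direction: subtract $(\xi_1-z)$ times \eqref{eqn:Habdef} at $(a-1,b)$ from \eqref{eqn:Habdef} at $(a,b)$; after shifting the summation index all terms cancel except the $i=0$ term, which is exactly $\widetilde{\zeta_2}^{-1}Q\,H_{a-1,b-1}$. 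Your treatment of the base cases by exactness, $\oint\partial_{\zeta_1}\bigl(Q^{-b}F\bigr)\,d\zeta_1=0$, is also sound and in fact slightly more robust than the paper's appeal to the Cauchy formula, since it needs only holomorphy of the integrand near the contour rather than the location of the pole $\xi_1$ inside $C_1$. In short: your route trades the paper's heavier algebraic decomposition for a lighter recurrence plus a small bookkeeping lemma; both are complete proofs of Lemma \ref{lem:230326}.
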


Analogous formul\ae\ to \eqref{eqn:Hab} hold
 if we replace $\zeta_1$ by $\zeta_2$ and $t$ by $-t$.

\begin{proof}
We begin with the case $a=0$.  
Since $\xi_1 \equiv \xi_1(\zeta_2)$ is independent 
 of the variable $\zeta_1$, 
 one has 
\[
 \oint \frac{1}{(\zeta_1-\xi_1)^b} 
\frac{\partial F} {\partial \zeta_1} d \zeta_1
= \frac {2\pi \sqrt{-1}} {(b-1)!} \frac{\partial^b F} {\partial \zeta_1^b} (\xi_1, \zeta_2)
=b \oint \frac F {(\zeta_1-\xi_1)^{b+1}} d \zeta_1.  
\]

Since $Q=\widetilde{\zeta_2}(\zeta_1-\xi_1)$, 
 the identity \eqref{eqn:Hab} holds for $a=0$ with 
$
H_{0,b}=b \widetilde{\zeta_2}.  
$

For $a \ge 1$, 
 we proceed by induction on $b$.  
Obviously, 
 \eqref{eqn:Hab} holds
 for $b=0$
 with $H_{a,0}=0$.  
Suppose $a, b \ge 1$.  
By \eqref{eqn:230317100}, 
 one has
\[
  \frac{(\zeta_1-z)^a}{Q^b}
  =
  \frac{(\xi_1-z)^a}{Q^b}
 +\frac1 {\widetilde \zeta_2}
  \sum_{i=0}^{a-1}\frac{(\xi_1-z)^i (\zeta_1-z)^{a-1-i}}{Q^{b-1}}.  
\]
Since both $\xi_1$ and $\widetilde{\zeta_2}$ are independent
 of the variable $\zeta_1$, 
 the induction step for the identity \eqref{eqn:Hab} is justified 
 by the recurrence relation \eqref{eqn:Habdef} defining $H_{a,b}$.  
\end{proof}

Here are the first two examples of the family $H_{a,b}$
 for $b=1$ and $2$.  
\begin{align}
H_{a,1}=&t^a(\zeta_2-z)^a\widetilde{\zeta_2}^{1-a}, 
\label{eqn:23032224}
\\
H_{a,2}=&t^{a-1}(\zeta_2-z)^{a-1}\widetilde{\zeta_2}^{1-a}
(2t(\zeta_2-z)+a Q).  
\label{eqn:23032610}
\end{align}

The proof for Proposition \ref{prop:23032266} uses
 a special case of the formul{\ae} \eqref{eqn:Hab}:
\begin{align}
\label{eqn:23032244}
\oint \frac{(\zeta_1-z)^2}{Q^2} \frac{\partial F}{\partial \zeta_1}d \zeta_1
=\,&\oint \frac {2t(\zeta_1-z)(\zeta_2-z)F}{Q^3}d \zeta_1, 
\\
\label{eqn:23031838}
\oint \frac{\zeta_j}{Q} \frac{\partial F}{\partial \zeta_j} d \zeta_j
=\,&
-(Q -\frac{\partial Q}{\partial \zeta_j} \zeta_j)
\oint \frac{F}{Q^{2}} d \zeta_j
\qquad
\text{for $j=1,2$}.  
\end{align}

\begin{proof}
[Proof of Proposition \ref{prop:23032266}]
By the definition \eqref{eqn:Xi} of $\xi_1$ and $\xi_2$, 
 a direct computation shows
\begin{equation}
   \widetilde{\zeta_1} \widetilde{\zeta_2} \xi_1 = (z+t) Q -t^2 \zeta_1, 
\qquad
   \widetilde{\zeta_1} \widetilde{\zeta_2} \xi_2 = (z-t) Q -t^2 \zeta_2.  
\label{eqn:23032263}
\end{equation}

By \eqref{eqn:Hab} and \eqref{eqn:23032224}, 
 one has
\[
   \oint \frac{-\zeta_1 \zeta_2+ \zeta_2^2}{Q} 
         \frac{\partial F}{\partial \zeta_2} d \zeta_2
   =
   \xi_2 (\xi_2-\zeta_1)\widetilde{\zeta_1} 
   \oint \frac{F}{Q^2} d \zeta_2.  
\]
By \eqref{eqn:23031810} and \eqref{eqn:23032263}, 
 the right-hand side equals
\[
   -\xi_2(\zeta_1-z)^2 \oint \frac F{Q^2} d \zeta_2
  =-\oint 
    \frac{((z-t)Q-t^2\zeta_2) (\zeta_1-z)^2}{\widetilde{\zeta_1}\widetilde{\zeta_2} Q^2} 
 F d \zeta_2.  
\]
Applying this formula to $F:=\frac{\partial f}{\partial \zeta_1}$, 
 one has 
\begin{equation}
\label{eqn:zeta12Q}
   (2 \pi \sqrt{-1})^2 I_2(f)
  = - \oint \oint
    \frac{((z-t)Q-t^2\zeta_2) (\zeta_1-z)^2}{\widetilde{\zeta_1}\widetilde{\zeta_2} Q^2} 
   \frac{\partial f}{\partial \zeta_1}
   d \zeta_1 d \zeta_2.  
\end{equation}

Since 
$
   -\widetilde{\zeta_1} \widetilde{\zeta_2} (Q+t^2)+Q^2
   =t^4
$, 
 one has
\[
   \frac{1}{\widetilde{\zeta_2}Q^2}
  =-\frac{\widetilde{\zeta_1}}{t^4 Q^2}(Q+t^2)
   + \frac 1 {t^4\widetilde{\zeta_2}}.  
\]
Thus the function 
\[
  \frac{((z-t)Q-t^2\zeta_2)(\zeta_1-z)^2}{\widetilde{\zeta_1}}\cdot \frac 1 {t^4\widetilde{\zeta_2}} \frac{\partial f}{\partial \zeta_1}
  =
  \frac{\xi_2(\zeta_1-z)^2}{t^4}\frac{\partial f}{\partial \zeta_1}
\]
 is holomorphic function in $\zeta_1$, 
 and does not contribute
 to the integral in \eqref{eqn:zeta12Q}, 
 which reduces therefore to 
\begin{align*}
  &\oint \oint \frac{(Q+t^2)((z-t)Q-t^2\zeta_2)(\zeta_1-z)^2}{t^4 Q^2}
               \frac{\partial f}{\partial \zeta_1} 
               d \zeta_1 d \zeta_2
\\
   =
   & \oint \oint \frac{(t^2 Q (z-t-\zeta_2)-t^4 \zeta_2)(\zeta_1-z)^2}
                     {t^4 Q^2}
   \frac{\partial f}{\partial \zeta_1}
   d \zeta_1 d \zeta_2
\\
   =
   & -\oint \oint \frac{\widetilde{\zeta_2}(\zeta_1-z)^2}{t^2 Q} 
                 \frac{\partial f}{\partial \zeta_1} d \zeta_1 d \zeta_2
   - \oint \oint \frac{\zeta_2 (\zeta_1-z)^2}{Q^2} 
                 \frac{\partial f}{\partial \zeta_1} d \zeta_1 d \zeta_2
\\
   =
   &- \oint \oint \frac{(\zeta_2-z)^2}{Q^2} 
                 f d \zeta_1 d \zeta_2
   - \oint \oint \frac{2 t (\zeta_1-z)(\zeta_2-z)\zeta_2}{Q^3} 
                 f d \zeta_1 d \zeta_2.  
\end{align*}
In the first equality, 
 we have used the fact
 that the integral involving $Q^2$ 
 in the numerator vanishes.  
The last equality follows from Lemma \ref{lem:230326}, 
 or more precisely, from \eqref{eqn:23032244} and \eqref{eqn:23031838}.  
Hence the formula for $I_2(f)$ is proved.  
The proof for $I_1(f)$ is similar.  
\end{proof}

\begin{proof}
[Proof of Proposition \ref{prop:23031775}]
By \eqref{eqn:23031838}, 
\begin{align*}
\oint \frac{(\zeta_1-\zeta_2)\frac{\partial f}{\partial \zeta_1}} Q d \zeta_1 
=& (-Q+ \frac{\partial Q}{\partial \zeta_1} \zeta_1 - \frac{\partial Q}{\partial \zeta_1} \zeta_2)
\oint \frac f {Q^2} d \zeta_1
\\
\oint \frac{(\zeta_1-\zeta_2)\frac{\partial f}{\partial \zeta_2}} Q d \zeta_2 
=& (\frac{\partial Q}{\partial \zeta_2} \zeta_1 + Q - \frac{\partial Q}{\partial \zeta_2} \zeta_2)
\oint \frac f {Q^2} d \zeta_2.  
\end{align*}
Therefore one obtains 
\begin{equation*}
I_3(f)
=\frac{1}{(2 \pi \sqrt{-1})^2} 
  \oint\oint \frac{- (\zeta_1-z)^2-(\zeta_2-z)^2}{Q^2} f d \zeta_1 d \zeta_2
\end{equation*}
because 
$
  (-Q+\widetilde{\zeta_2} \zeta_1-\widetilde{\zeta_2}\zeta_2)
  -
  (\widetilde{\zeta_1} \zeta_1 + Q -\widetilde{\zeta_1}\zeta_2)
=
  - ({\zeta_1}-z)^2 -({\zeta_2}-z)^2.  
$

\end{proof}

By Propositions \ref{prop:23032266} and \ref{prop:23031775},
 the proof of Theorem \ref{thm:Luminy0316} is now complete.

We end this section by providing an example
 of closed formul\ae\ for $T f(z,t)$ for a specific family
 of functions $f \in {\mathcal{O}}(\Pi \times \Pi)$, 
 where $\Pi$ is the upper half plane.  
The family $\{f_{\ell}\}_{\ell \in {\mathbb{N}}}$ below
 gives the complete set 
 of \lq\lq{singular vectors}\rq\rq\ 
 in the tensor product of the two Hardy space, 
 see Section \ref{subsec:6.3}:
\begin{example}
\label{ex:23050410}
For $\ell \in {\mathbb{N}}$, 
 we set 
\begin{equation}
\label{eqn:minKK}
   f_{\ell}(\zeta_1, \zeta_2)
   :=(\zeta_1- \zeta_2)^{\ell}(\zeta_1 + \sqrt{-1})^{-\ell-1}
     (\zeta_2+\sqrt{-1})^{-\ell-1}.  
\end{equation}
Then one has the following:
\newline
{\rm{(1)}}\enspace
$P f_{\ell}=-\ell(\ell+1) f_{\ell}$.  
\newline
{\rm{(2)}}\enspace
$(T f_{\ell})(z,t)=\left({2 \ell} \atop \ell \right)t^{\ell}
(z+\sqrt{-1})^{-2\ell-2}$.  
\end{example}

\begin{proof}
(1)\enspace
We set $[\ell, b, c]:=(\zeta_1-\zeta_2)^{\ell}(\zeta_1+i)^{-b}(\zeta_2+i)^{-c}$. 
By a direct computation from the definition \eqref{eqn:23031847} of $P$, 
 one has 
\begin{multline*}
  P[\ell, b, c]= -\ell(\ell+1)[\ell, b, c]
+b c[\ell+2, b+1, c+1]
\\
+(\ell+1)b[\ell+1, b+1, c] -(\ell+1)c[\ell+1, b, c+1].  
\end{multline*}
Since $[\ell+1, b+1, c]-[\ell+1, b, c+1]=-[\ell+2, b+1, c+1]$, 
 we have $P[\ell, \ell+1, \ell+1]=-\ell(\ell+1)[\ell, \ell+1, \ell+1]$.  
\newline
(2)\enspace
By Corollary \ref{cor:Luminy0316} and Theorem \ref{thm:gRC}, 
 one has
\[
  (T f_{\ell})(z, t)= \frac{1}{\ell !}t^{\ell}(R_{\ell}f_{\ell})(z).  
\]
By the definition \eqref{eqn:RC} of the Rankin--Cohen bracket $R_{\ell}$, 
 one has
\begin{align*}
  (R_{\ell}f_{\ell})(z)
=&(R_{\ell} (\zeta_1-\zeta_2)^{\ell})(z + \sqrt{-1})^{-2 \ell -2}
\\
=& \frac{(2\ell)!}{\ell!}(z+\sqrt{-1})^{-2\ell-2},
\end{align*}
where the second equation follows from the formula
$$
\sum_{j=0}^\ell \left( \ell \atop j\right)^2=\frac{(2\ell)!}{\ell! \ell!}.
$$
Thus the second assertion is verified.  
\end{proof}

\section{Generating operators and holographic operators}
\label{sec:holographic}

Throughout this section, 
 we assume
 that $D$ is a convex domain in ${\mathbb{C}}$.  
Then any two elements $\zeta_1, \zeta_2 \in D$
 can be joined by a line segment contained in $D$.  
For $\ell \in {\mathbb{N}}$, 
 we consider a weighted average of $g \in {\mathcal{O}}(D)$
 along the line segment
 between $\zeta_1$ and $\zeta_2$
 given by 
\[
  (\Psi_{\ell} g)(\zeta_1, \zeta_2)
  :=(\zeta_1- \zeta_2)^{\ell}
    \int_{-1}^{1} 
      g(\frac{(\zeta_2-\zeta_1)v +(\zeta_1 + \zeta_2)}{2})(1-v^2)^{\ell}d v.  
\]

We investigate the \lq\lq{generating operator}\rq\rq\ $T$
 in connection with $\Psi_{\ell}$.  
Recall from Corollary \ref{cor:Luminy0316}
 that if $f \in \Sol(D \times D)_{\ell}$, 
 namely, 
 if $P f = -\ell(\ell+1) f$, 
 then $t^{-\ell}(T f)(z, t)$ is independent
 of $t$, 
 which we shall simply denote by 
 $(t^{-\ell} T f)(z)$.

\begin{theorem}
\label{thm:Tlsol}
Let $\ell \in {\mathbb{N}}$.  
\par\noindent
{\rm{(1)}}\enspace
$t^{-\ell}T \colon \Sol(D \times D)_{\ell} \overset \sim \rightarrow {\mathcal{O}}(D)$
 is a bijection.  
\par\noindent
{\rm{(2)}}\enspace
The inverse of $t^{-\ell}T$ is given 
 by the integral operator $\Psi_{\ell}$, 
 namely, 
$
   \Psi_{\ell} \colon {\mathcal{O}}(D)
   \overset \sim \rightarrow
   \Sol(D \times D)_{\ell}
$
 is a bijection
 and 
$
  t^{-\ell} T \circ \Psi_{\ell}
  = \frac{2^{2\ell+1}}{2 \ell+1} \operatorname{id}.
$
\end{theorem}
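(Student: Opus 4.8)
The plan is to reduce the theorem to two explicit computations about the operator $\Psi_{\ell}$, after which both assertions follow formally. First I would show that $\Psi_{\ell}$ maps $\mathcal{O}(D)$ into $\Sol(D \times D)_{\ell}$, i.e.\ that $P(\Psi_{\ell} g) = -\ell(\ell+1)\Psi_{\ell} g$ for every $g \in \mathcal{O}(D)$, and then compute the composite $t^{-\ell}T \circ \Psi_{\ell}$ directly, verifying that it equals $\frac{2^{2\ell+1}}{2\ell+1}\operatorname{id}$. Granting these, the bijectivity statements are immediate. The map $t^{-\ell}T \colon \Sol(D\times D)_{\ell} \to \mathcal{O}(D)$ is injective because $T$ itself is injective by Theorem \ref{thm:23032333}(3) (if $t^{-\ell}Tf=0$ then $Tf=0$, so $f=0$), and it is surjective because $t^{-\ell}T \circ \bigl(\tfrac{2\ell+1}{2^{2\ell+1}}\Psi_{\ell}\bigr) = \operatorname{id}_{\mathcal{O}(D)}$, where each $\tfrac{2\ell+1}{2^{2\ell+1}}\Psi_{\ell} g$ lies in $\Sol(D\times D)_{\ell}$. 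Hence $t^{-\ell}T$ is a bijection, proving (1); composing the scalar relation on the left with its inverse gives $\Psi_{\ell} = \tfrac{2^{2\ell+1}}{2\ell+1}(t^{-\ell}T)^{-1}$, so $\Psi_{\ell}$ is itself a bijection onto $\Sol(D\times D)_{\ell}$, proving (2).

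For the eigenvalue equation I would pass to the center-of-mass coordinates $u := \zeta_1 - \zeta_2$ and $s := \tfrac12(\zeta_1 + \zeta_2)$, in which $P$ takes the form $-u^2\partial_u^2 + \tfrac14 u^2 \partial_s^2 - 2u\,\partial_u$ and $\Psi_{\ell} g = u^{\ell}\Phi$ with $\Phi(u,s) = \int_{-1}^{1} g(s - \tfrac{uv}{2})(1-v^2)^{\ell}\,dv$. Expanding $P(u^{\ell}\Phi)$ by the Leibniz rule, the terms proportional to $u^{\ell}\Phi$ assemble into the eigenvalue $-\ell(\ell+1)$, and what remains is a combination of $u^{\ell+1}\partial_u\Phi$ and $u^{\ell+2}(\tfrac14\partial_s^2 - \partial_u^2)\Phi$. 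Differentiating under the integral sign (licit since $g$ is holomorphic and the $v$-integral is over a compact interval) gives $(\tfrac14\partial_s^2 - \partial_u^2)\Phi = \tfrac14\int_{-1}^{1} g''(w)(1-v^2)^{\ell+1}\,dv$ with $w = s - \tfrac{uv}{2}$; the decisive feature is that the weight gains one power of $(1-v^2)$. I would then integrate by parts in $v$ using $g''(w) = -\tfrac{2}{u}\frac{d}{dv}g'(w)$: the boundary term vanishes because $(1-v^2)^{\ell+1}$ is zero at $v = \pm 1$, and the surviving term is exactly what cancels $u^{\ell+1}\partial_u\Phi$. This cancellation is the crux of the argument and the step I expect to be the main obstacle, since it is the only place where the precise exponents and the factor $(1-v^2)^{\ell}$ in the definition of $\Psi_{\ell}$ are forced.

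For the scalar computation I would invoke Corollary \ref{cor:Luminy0316}: since $\Psi_{\ell} g \in \Sol(D\times D)_{\ell}$, one has $T(\Psi_{\ell} g)(z,t) = t^{\ell}\psi(z)$, and comparing with Theorem \ref{thm:gRC} identifies $\psi = \tfrac{1}{\ell!}R_{\ell}(\Psi_{\ell} g)$. Applying $R_{\ell}$ to $(\zeta_1-\zeta_2)^{\ell}\Phi$ and restricting to the diagonal, only the Leibniz terms in which all $\ell$ derivatives fall on the factor $(\zeta_1-\zeta_2)^{\ell}$ survive, because any derivative landing on $\Phi$ leaves a positive power of $(\zeta_1-\zeta_2)$ that vanishes at $\zeta_1 = \zeta_2$. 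Using $\partial_{\zeta_1}^{\ell-j}\partial_{\zeta_2}^{j}(\zeta_1-\zeta_2)^{\ell} = (-1)^j\ell!$ and $\Phi|_{\zeta_1=\zeta_2=z} = g(z)\int_{-1}^{1}(1-v^2)^{\ell}\,dv$, the sum collapses to $R_{\ell}(\Psi_{\ell} g)(z) = \ell!\,\bigl(\sum_{j=0}^{\ell}\binom{\ell}{j}^2\bigr)\,g(z)\int_{-1}^{1}(1-v^2)^{\ell}\,dv$. The Vandermonde identity $\sum_{j}\binom{\ell}{j}^2 = \binom{2\ell}{\ell}$ together with the beta integral $\int_{-1}^{1}(1-v^2)^{\ell}\,dv = \frac{2^{2\ell+1}(\ell!)^2}{(2\ell+1)!}$ then yields $\tfrac{1}{\ell!}R_{\ell}(\Psi_{\ell} g) = \frac{2^{2\ell+1}}{2\ell+1}g$, completing the proof.
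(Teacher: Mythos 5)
Your proposal is correct and follows essentially the same route as the paper's proof: both establish $P(\Psi_{\ell}g)=-\ell(\ell+1)\Psi_{\ell}g$ by a direct computation in which the residual terms cancel via integration by parts in $v$ against $(1-v^2)^{\ell+1}$ (the paper packages this cancellation as a single total $v$-derivative), then identify the coefficient of $t^{\ell}$ in $T(\Psi_{\ell}g)$ using Corollary \ref{cor:Luminy0316} and a Leibniz-rule evaluation on the diagonal, and finally deduce both bijectivity statements from the injectivity of $T$ (Theorem \ref{thm:23032333}). The only cosmetic difference is that you extract the $t^{\ell}$-coefficient through Theorem \ref{thm:gRC} and the Rankin--Cohen sum (invoking Vandermonde's identity $\sum_j\binom{\ell}{j}^2=\binom{2\ell}{\ell}$), whereas the paper reads it off the contour-integral expansion \eqref{eqn:QTaylor} directly; the two computations are identical in substance.
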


As an immediate consequence of Theorem \ref{thm:Tlsol} (2), 
 one has the following:

\begin{corollary}
\label{cor:dimSol}
For any $\ell \in {\mathbb{N}}$, 
 $\Sol({\mathbb{C}} \times {\mathbb{C}})_{\ell}$ is infinite-dimensional.  
\end{corollary}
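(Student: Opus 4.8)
The plan is to deduce Corollary~\ref{cor:dimSol} directly from Theorem~\ref{thm:Tlsol}(2) in the special case $D = \mathbb{C}$, which is a convex domain so that the theorem applies. The key point is that part~(2) asserts that $\Psi_\ell \colon \mathcal{O}(\mathbb{C}) \xrightarrow{\sim} \Sol(\mathbb{C} \times \mathbb{C})_\ell$ is a bijection. Since $\mathcal{O}(\mathbb{C})$, the space of entire functions, is manifestly infinite-dimensional (for instance the monomials $z^0, z^1, z^2, \dots$ are linearly independent), and a linear bijection carries a linearly independent set to a linearly independent set, the target space $\Sol(\mathbb{C} \times \mathbb{C})_\ell$ must also be infinite-dimensional.

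Concretely, first I would note that $t^{-\ell}T \circ \Psi_\ell = \frac{2^{2\ell+1}}{2\ell+1}\operatorname{id}$ with a nonzero scalar, so $\Psi_\ell$ is injective; combined with the surjectivity statement that $\Psi_\ell$ lands bijectively onto $\Sol(\mathbb{C}\times\mathbb{C})_\ell$, we get that $\Psi_\ell$ is an isomorphism of complex vector spaces. Then I would observe that $\dim_{\mathbb{C}} \mathcal{O}(\mathbb{C}) = \infty$ and transport this through the isomorphism to conclude $\dim_{\mathbb{C}} \Sol(\mathbb{C}\times\mathbb{C})_\ell = \infty$. This is the entire argument; no further computation is needed since Theorem~\ref{thm:Tlsol} has already done all the analytic work.

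There is essentially no obstacle here, as the statement is labelled an \emph{immediate consequence}. The only thing to verify is that $\mathcal{O}(\mathbb{C})$ is genuinely infinite-dimensional, which is standard, and that the convexity hypothesis of Theorem~\ref{thm:Tlsol} is met by $\mathbb{C}$, which is clear. If one wanted a self-contained witness of infinite dimensionality inside $\Sol(\mathbb{C}\times\mathbb{C})_\ell$ rather than merely invoking the abstract isomorphism, one could exhibit the images $\Psi_\ell(z^k)$ for $k \in \mathbb{N}$ and check their linear independence by applying $t^{-\ell}T$, which returns the linearly independent polynomials $\frac{2^{2\ell+1}}{2\ell+1} z^k$; but for the corollary the bijection statement already suffices.

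\begin{proof}
Take $D = \mathbb{C}$, which is a convex domain, so Theorem~\ref{thm:Tlsol} applies. By part~(2), the map $\Psi_\ell \colon {\mathcal{O}}(\mathbb{C}) \overset{\sim}{\rightarrow} \Sol(\mathbb{C} \times \mathbb{C})_\ell$ is a linear bijection. Since the space ${\mathcal{O}}(\mathbb{C})$ of entire functions contains the linearly independent family of monomials $\{z^k\}_{k \in \mathbb{N}}$, it is infinite-dimensional. As a linear isomorphism preserves the dimension, $\Sol(\mathbb{C} \times \mathbb{C})_\ell$ is infinite-dimensional as well.
\end{proof}
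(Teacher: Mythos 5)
Your proof is correct and follows the paper's own route exactly: the paper presents this corollary as an immediate consequence of Theorem \ref{thm:Tlsol}(2) applied with $D=\mathbb{C}$, which is precisely your argument of transporting the infinite dimensionality of ${\mathcal{O}}(\mathbb{C})$ through the bijection $\Psi_\ell$.
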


\begin{proof}
[Proof of Theorem \ref{thm:Tlsol}]
First, 
 we prove
$
  \operatorname{Image}\Psi_{\ell} \subset \Sol(D \times D)_{\ell}.  
$
Recall from \eqref{eqn:23031847} the definition of $P.$  
A direct computation shows 
\begin{align*}
&P(\Psi_{\ell}g)
+ \ell (\ell+1) \Psi_{\ell}g 
\\
=&
 -\frac 1 2 (\zeta_1 - \zeta_2)^{\ell+1} \int_{-1}^1 \frac{\partial}{\partial v}
 (g'(\frac{(\zeta_2- \zeta_1)v+(\zeta_1 + \zeta_2)}2)(1-v^2)^{\ell+1}) d v, 
\end{align*}
which vanishes for any $g \in {\mathcal{O}}(D)$.  
Hence $\Psi_{\ell}g$ is an eigenfunction of $P$
 for the eigenvalue $-\ell(\ell+1)$.

Second, 
 we prove
 that the \lq\lq{generating operator}\rq\rq\ $T$ gives
 the inverse of $\Psi_{\ell}$
 up to scalar multiplication, 
 that is, 
\begin{equation}
\label{eqn:Tpsi}
   T(\Psi_{\ell}g)(z,t)=\frac{2^{2 \ell+1}}{2 \ell +1} t^{\ell}g(z)
\qquad\text{for any $g \in {\mathcal{O}}(D)$.  }
\end{equation}

To see \eqref{eqn:Tpsi}, 
 we observe from Corollary \ref{cor:Luminy0316}
 that $t^{-\ell}(T \Psi_{\ell}g)(z,t)$ does not depend 
 on the variable $t$
 because $\Psi_{\ell} g \in \Sol(D \times D)_{\ell}$.  
On the other hand, 
 it follows from the expansion \eqref{eqn:QTaylor}
 that the coefficient of $t^{\ell}$ in $(T f)(z,t)$
 is given by 
\[
  \frac{1}{(2 \pi \sqrt{-1})^2}
  \oint \oint \frac{(-1)^{\ell}(\zeta_1-\zeta_2)^{\ell} f(\zeta_1, \zeta_2)}{
(\zeta_1-z)^{\ell+1}(\zeta_2-z)^{\ell+1}} d \zeta_1 d \zeta_2.  
\]

Applying this to $f=\Psi_{\ell}g$, 
 one sees that $t^{-\ell}(T \Psi_{\ell} g)(z,t)$ is equal to 
\begin{align*}
  &\frac{(-1)^{\ell}}{(2 \pi \sqrt{-1})^2}
  \oint \oint 
  \frac
  {(\zeta_1-\zeta_2)^{2\ell}\int_{-1}^1 g(\frac{(\zeta_2- \zeta_1)v+(\zeta_1 + \zeta_2)}2)(1-v^2)^{\ell} d v}
  {(\zeta_1-z)^{\ell+1}(\zeta_2-z)^{\ell+1}} d \zeta_1 d \zeta_2
\\
=& \frac{(-1)^{\ell}}{(\ell !)^2}
   \left. \frac{\partial^{2\ell}}
               {\partial \zeta_1^{\ell} \partial \zeta_2^{\ell}}\right|_{\zeta_1=\zeta_2=z}
   ((\zeta_1-\zeta_2)^{2\ell}
    \int_{-1}^1 g(\frac{(\zeta_2- \zeta_1)v+(\zeta_1 + \zeta_2)}2)(1-v^2)^{\ell} d v).  
\end{align*}

An iterated use of the Leibniz rule develops the right-hand side
 as a sum of various derivatives, 
 among which the only non-vanishing term is 
\[ g(z)\frac{(2 \ell)!}{(\ell!)^2}
   \int_{-1}^1 (1-v^2)^{\ell} d v 
= \frac{2^{2\ell+1}}{2 \ell+1}g(z).  
\]
Thus we have shown \eqref{eqn:Tpsi}, 
 hence the injective morphism
 $t^{-\ell}T \colon \Sol(D \times D)_{\ell} \to {\mathcal{O}}(D)$
 is also surjective.

Finally, 
 let us show the surjectivity of $\Psi_{\ell}$.  
For any $f \in \Sol(D \times D)_{\ell}$, 
 there exists $g \in {\mathcal{O}}(D)$
 such that
$(T f)(z,t)=t^{\ell}g (z)$
 by Corollary \ref{cor:Luminy0316}.  
Since the right-hand side equals 
 $\frac{2\ell+1}{2^{2\ell+1}} T \Psi_{\ell}(g)$
  by \eqref{eqn:Tpsi}, 
 one has 
 $f=\frac{2\ell+1}{2^{2\ell+1}} \Psi_{\ell}g$
 because $T$ is injective.  
Thus the surjectivity of $\Psi_{\ell}$ is shown.  
\end{proof}

\begin{remark}
When $D$ is the upper half plane $\Pi$, 
 the integral operator $\Psi_{\ell}$ appeared
 in the study of the {\it{holographic transforms}}
 for the branching problem
 of infinite-dimensional representations
 of $SL(2,{\mathbb{R}})$.  
In this case, 
 the bijectivity of $\Psi_{\ell}$ was shown in \cite{KP20}
 by a different approach based on the representation theory.  
See Section \ref{sec:SL}.  
\end{remark}

\section{The generating operator $T$ and the Hardy space}
\label{sec:Hardy}

Let $\Pi$ be the upper half plane.  
As we have seen in Example \ref{ex:UD}, 
 the \lq\lq{generating operator}\rq\rq\
 $T \colon {\mathcal{O}}(\Pi \times \Pi) \to {\mathcal{O}}(U_{\Pi})$
 is well-defined
 where $U_{\Pi}=\{(z,t) \in {\mathbb{C}}^2: 2|t|< \operatorname{Im}z\}$.  
This section discusses how the generating operator $T$ acts
 on the tensor product 
 of two Hardy spaces.

We recall that the Hardy space
 on $\Pi$
 is a Hilbert space defined by 
\[
  {\bf{H}}(\Pi)
 =\{h \in {\mathcal{O}}(\Pi):
  \|h \|_{{\bf{H}}(\Pi)}^2 := \sup_{y>0} \int_{-\infty}^{\infty} |h(x+\sqrt{-1}y)|^2 d x < \infty\}.  
\]
Let ${\bf{H}}(\Pi \times \Pi)$ be the Hilbert completion
 ${\bf{H}}(\Pi) \widehat {\otimes} {\bf{H}}(\Pi)$
 of the tensor product
 of two Hardy spaces ${\bf{H}}(\Pi)$.  
Any holomorphic differential operator $P$ 
 acting on ${\mathcal{O}}(\Pi \times \Pi)$ induces a continuous operator
 on this Hilbert space.    
In turn,
 the eigenspace  
$
   {\bf{H}}(\Pi \times \Pi)_{\ell}:=\Sol(\Pi \times \Pi)_{\ell} \cap {\bf{H}}(\Pi \times \Pi)
$
 is a Hilbert subspace for every $\ell \in {\mathbb{N}}$.

\begin{theorem}
\label{thm:Hardy}
Let $P$ be the differential operator given in \eqref{eqn:23031847}.  
\par\noindent
{\rm{(1)}}\enspace
The differential operator $P$ defines
 a self-adjoint operator
 on the Hilbert space ${\bf{H}}(\Pi \times \Pi)$.  

\par\noindent
{\rm{(2)}}\enspace
{\rm{(Eigenspace decomposition)}}\enspace
${\bf{H}}(\Pi \times \Pi)$ decomposes
 into the discrete Hilbert sum
 of eigenspaces
$
   {\bf{H}}(\Pi \times \Pi)_{\ell}
$
 of $P$ where $\ell$ runs over ${\mathbb{N}}$.  

\par\noindent
{\rm{(3)}}\enspace
The generating operator $T$ induces a family of linear operators
\[
  t^{-\ell} T \colon 
  {\bf{H}}(\Pi \times \Pi)_{\ell}
  \overset\sim \rightarrow
  {\mathcal{O}}(\Pi) \cap
  L^2(\Pi, y^{2\ell} d x d y)
\]
which are unitary up to rescaling:
\begin{equation}
\label{eqn:Tlnorm}
  \|t^{-\ell} T f\|_{L^2(\Pi, y^{2\ell+2} d x d y)}^2
  =
  b_{\ell} \|f\|_{{\bf{H}}(\Pi \times \Pi)}^2
\quad
\text{for any $f \in {\bf{H}}(\Pi \times \Pi)_{\ell}$}  
\end{equation}
where we set 
\begin{equation}
\label{eqn:23041613}
   b_{\ell}
 :=\frac{(2 \ell)!}{2^{2\ell+2} \pi (2 \ell+1)(\ell !)^2}
  =\frac{(2 \ell-1)!!}{4 \pi (2 \ell+1)(2 \ell)!!}.  
\end{equation}

\end{theorem}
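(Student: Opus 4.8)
The plan is to transport the whole problem to the Fourier--Paley--Wiener model of the Hardy space, where $P$ becomes a classical one-variable operator. Recall that the Fourier--Laplace transform $h(z)=\int_0^\infty a(\xi)e^{\sqrt{-1}z\xi}\,d\xi$ identifies $\mathbf H(\Pi)$ with $L^2((0,\infty),d\xi)$, and Plancherel gives $\sup_{y>0}\int_{\mathbb R}|h(x+\sqrt{-1}y)|^2\,dx=2\pi\int_0^\infty|a(\xi)|^2\,d\xi$, so the identification is an isometry up to the scalar $2\pi$ in the squared norms. Consequently $\mathbf H(\Pi\times\Pi)\cong L^2((0,\infty)^2,d\xi_1 d\xi_2)$ with $\|f\|_{\mathbf H(\Pi\times\Pi)}^2=(2\pi)^2\|\widehat f\|^2$. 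I would then pass to the variables $\xi=\xi_1+\xi_2\in(0,\infty)$ and $w=\frac{\xi_1-\xi_2}{\xi_1+\xi_2}\in(-1,1)$, under which $d\xi_1 d\xi_2=\tfrac12\,\xi\,d\xi\,dw$, so the model factors as a product $L^2((0,\infty),\tfrac12\xi\,d\xi)\,\widehat{\otimes}\,L^2((-1,1),dw)$.

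The crux is to show that in this model $P$ becomes the Legendre operator in $w$ alone. Using the transcription $\partial_{\zeta_j}\leftrightarrow\sqrt{-1}\xi_j$ and (multiplication by) $\zeta_j\leftrightarrow\sqrt{-1}\partial_{\xi_j}$, a direct rewriting of \eqref{eqn:23031847} gives, after passing to $\xi$ and $v=\xi_1-\xi_2$, the operator $(\xi^2-v^2)\partial_v^2-2v\partial_v$; the homogeneity in $\xi$ is then removed by setting $v=\xi w$, leaving
\[
\widehat P=(1-w^2)\frac{\partial^2}{\partial w^2}-2w\frac{\partial}{\partial w},
\]
which acts only in $w$ with $\xi$ a spectator. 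This is exactly the self-adjoint Legendre operator on $L^2((-1,1),dw)$, whose eigenfunctions are the Legendre polynomials $P_\ell(w)$ with eigenvalue $-\ell(\ell+1)$. Parts (1) and (2) follow immediately: $P\cong \mathrm{id}\otimes \widehat P$ is self-adjoint, and completeness of $\{P_\ell\}_{\ell\in\mathbb N}$ in $L^2((-1,1),dw)$ yields the orthogonal Hilbert-sum decomposition, with $\mathbf H(\Pi\times\Pi)_\ell$ corresponding to the symbols $\{a(\xi)P_\ell(w):a\in L^2((0,\infty),\tfrac12\xi\,d\xi)\}$.

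For part (3) I would compute $t^{-\ell}Tf=\tfrac1{\ell!}R_\ell f$ — the coefficient of $t^\ell$ in $Tf$, by Theorem \ref{thm:gRC} and Corollary \ref{cor:Luminy0316} — directly on the Fourier side. The combinatorial bridge is the classical identity $\sum_{j=0}^\ell(-1)^j\binom{\ell}{j}^2\xi_1^{\ell-j}\xi_2^{\,j}=\xi^\ell P_\ell(w)$, which turns $R_\ell$ into multiplication by $(\sqrt{-1})^\ell\xi^\ell P_\ell(w)$. Applying this to $\widehat f=a(\xi)P_\ell(w)$ and integrating in $w$ against $\int_{-1}^1 P_\ell(w)^2\,dw=\frac{2}{2\ell+1}$ shows that $t^{-\ell}Tf$ is again a Paley--Wiener integral, with symbol $b(\xi)=\frac{(\sqrt{-1})^\ell}{(2\ell+1)\ell!}\,\xi^{\ell+1}a(\xi)$. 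I would then evaluate its weighted Bergman norm (weight $y^{2\ell}$, the target space in (3)) by Plancherel in $x$ followed by the Euler integral $\int_0^\infty y^{2\ell}e^{-2y\xi}\,dy=\frac{(2\ell)!}{(2\xi)^{2\ell+1}}$; comparing the outcome with $\|f\|_{\mathbf H(\Pi\times\Pi)}^2=\frac{(2\pi)^2}{2\ell+1}\int_0^\infty|a(\xi)|^2\xi\,d\xi$ reproduces exactly the constant $b_\ell$ of \eqref{eqn:23041613}.

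The main obstacle is the computation in the middle paragraph: verifying that the second-order operator $P$ with polynomial coefficients transforms, after the Fourier transform and the nonlinear change of variables to $(\xi,w)$, into the \emph{pure} Legendre operator with no residual $\xi$-dependence — the cancellation of the first- and zeroth-order terms is the delicate point and must be tracked carefully, and the underlying manipulations (integration by parts discarding boundary terms, differentiation under the integral sign) justified on a dense subspace, e.g. symbols that are smooth and compactly supported in $(0,\infty)^2$. Once this normal form is established, self-adjointness, the spectral decomposition, and the explicit constant $b_\ell$ all follow from standard facts about Legendre polynomials and the Gamma integral.
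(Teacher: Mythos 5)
Your proposal is correct and follows essentially the same route as the paper: the Paley--Wiener identification of ${\bf{H}}(\Pi \times \Pi)$ with $L^2({\mathbb{R}}_+^2)$, the radial/angular change of variables under which $P$ becomes the Legendre operator $(1-w^2)\partial_w^2-2w\partial_w$ with the radial variable as spectator (this yields (1) and (2)), and then for (3) the identity $t^{-\ell}Tf=\frac{1}{\ell!}R_\ell f$ on eigenspaces combined with the Rodrigues-type identity $\sum_j(-1)^j\binom{\ell}{j}^2\xi_1^{\ell-j}\xi_2^{j}=\xi^\ell P_\ell(w)$, the Legendre norm $\frac{2}{2\ell+1}$, and Plancherel plus the Gamma integral, reproducing $b_\ell$ exactly as in the paper's Propositions \ref{prop:Ftilde} and \ref{prop:IHES230326}. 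The only deviations are cosmetic (a sign convention in the angular variable and phrasing $R_\ell$'s Fourier-side action via the polynomial identity rather than by differentiating the exponential kernel), and your delicate middle-paragraph cancellation does check out, so there is no gap.
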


For the proof of Theorem \ref{thm:Hardy}, 
 we use the double Fourier--Laplace transform
 ${\mathcal{F}}$
 defined by 
\[
  F(x,y) \mapsto ({\mathcal{F}}F)(\zeta_1, \zeta_2)
:=\int_{0}^{\infty}\int_{0}^{\infty} F(x,y)e^{\sqrt{-1}(x \zeta_1 + y \zeta_2)} d x d y.  
\]
According to the Payley--Wiener theorem, 
 the Fourier--Laplace transform ${\mathcal{F}}$ establishes a bijection from
 $L^2({\mathbb{R}}_+ \times {\mathbb{R}}_+)$
 onto ${\bf{H}}(\Pi \times \Pi)$, 
 and satisfies 
$
\|{\mathcal{F}}F\|_{{\bf{H}}(\Pi \times \Pi)}^2
=
(2\pi)^2 \|F\|_{L^2({\mathbb{R}}_+ \times {\mathbb{R}}_+)}^2
$
for all $F \in L^2({\mathbb{R}}_+ \times {\mathbb{R}}_+)$.  
The inverse 
${\mathcal{F}}^{-1} \colon {\bf{H}}(\Pi \times \Pi) \to L^2({\mathbb{R}}_+ \times {\mathbb{R}}_+)$
 is given by 
\[
   ({\mathcal{F}}^{-1}f)(x,y)
 =\lim_{\eta_1 \downarrow 0}\lim_{\eta_2 \downarrow 0}
 \frac 1 {(2\pi)^2}
 \int_{{\mathbb{R}}^2} f(\zeta_1, \zeta_2) e^{-\sqrt{-1}(\zeta_1 x+ \zeta_2 y)} d \xi_1 d \xi_2, 
\]
where we write $\zeta_j=\xi_j + \sqrt{-1} \eta_j$.

The change of variables $(x,y)=(\frac s 2(1-v), \frac s 2(1+v))$
 yields a unitary map
$
  L^2({\mathbb{R}}_+ \times (-1,1), s d s d v)
  \overset \sim \longrightarrow
  L^2({\mathbb{R}}_+ \times {\mathbb{R}}_+, 2 d x d y).
$
We denote its composition with ${\mathcal{F}}$ by 
\[
   \widetilde{\mathcal{F}} \colon L^2({\mathbb{R}}_+ \times (-1,1), s d s d v)
\to {\bf{H}}(\Pi \times \Pi).  
\]
The inverse is given by
 $(\widetilde{\mathcal{F}}^{-1}f)(s,v)
 =({\mathcal{F}}^{-1}f)(\frac s 2(1-v), \frac s 2(1-v))$.

\begin{proposition}
\label{prop:Ftilde}
{\rm{(1)}}\enspace
$
  \widetilde{\mathcal{F}} \colon 
  L^2({\mathbb{R}}_+ \times (-1,1), s d s d v)
  \overset \sim \rightarrow
{\bf{H}}(\Pi \times \Pi)
$
is a unitary map
 up to a scalar multiplication, 
 namely, 
\[
  \|f\|_{{\bf{H}}^2(\Pi \times \Pi)}^2
  =
2\pi^2 \|(\widetilde{\mathcal{F}}^{-1} f)(s,v)\|_{ L^2({\mathbb{R}}_+ \times (-1,1), s d s d v)}^2
\quad
\text{for $f \in {\bf{H}}(\Pi \times \Pi)$}.
\]
\newline
{\rm{(2)}}\enspace
The operator $\widetilde P:= \widetilde {\mathcal{F}}^{-1} \circ P \circ {\mathcal{F}}$ takes the following form:
\begin{equation}
\label{eqn:Ptilde}
  \widetilde P=(1-v^2) \partial_v^2 -2v \partial_v.  
\end{equation}
\end{proposition}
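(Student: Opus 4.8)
The plan is to treat the two assertions separately: I would derive (1) from the Paley--Wiener isometry recalled above together with the Jacobian of the substitution $(x,y)=(\tfrac s2(1-v),\tfrac s2(1+v))$, and derive (2) by conjugating $P$ through the Fourier--Laplace transform and then rewriting the resulting polynomial-coefficient operator in the variables $(s,v)$.

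For (1), first I would record that the substitution $(x,y)=(\tfrac s2(1-v),\tfrac s2(1+v))$ has Jacobian $\tfrac s2$, so that $dx\,dy=\tfrac s2\,ds\,dv$ and hence $\|F\|_{L^2({\mathbb{R}}_+\times{\mathbb{R}}_+,dx\,dy)}^2=\tfrac12\|\phi\|_{L^2({\mathbb{R}}_+\times(-1,1),s\,ds\,dv)}^2$, where $\phi(s,v):=F(\tfrac s2(1-v),\tfrac s2(1+v))$. Composing this (measure-preserving up to the factor $\tfrac12$) identification with the Paley--Wiener relation $\|{\mathcal{F}}F\|_{{\bf{H}}(\Pi\times\Pi)}^2=(2\pi)^2\|F\|_{L^2({\mathbb{R}}_+\times{\mathbb{R}}_+)}^2$ immediately yields $\|f\|_{{\bf{H}}(\Pi\times\Pi)}^2=2\pi^2\|\widetilde{\mathcal{F}}^{-1}f\|_{L^2({\mathbb{R}}_+\times(-1,1),s\,ds\,dv)}^2$ for $f=\widetilde{\mathcal{F}}\phi$; the bijectivity of $\widetilde{\mathcal{F}}$ follows since it is a composition of bijections.

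For (2), the main step is the conjugation $\widehat P:={\mathcal{F}}^{-1}\circ P\circ{\mathcal{F}}$; since $\widetilde{\mathcal{F}}$ is $\mathcal{F}$ followed by the change of variables, the claimed operator is the coordinate expression of $\widehat P$ in $(s,v)$. Under ${\mathcal{F}}$ the elementary dictionary is $\partial_{\zeta_1}\mapsto\sqrt{-1}\,x$, $\partial_{\zeta_2}\mapsto\sqrt{-1}\,y$ (exactly) and $\zeta_1\mapsto\sqrt{-1}\,\partial_x$, $\zeta_2\mapsto\sqrt{-1}\,\partial_y$. Substituting these into \eqref{eqn:23031847} and expanding, keeping track of operator order since the multiplications by $(\zeta_1-\zeta_2)$ act last, gives after collecting terms
\[
  \widehat P=xy\,(\partial_x-\partial_y)^2+(y-x)(\partial_x-\partial_y).
\]
I would then pass to the coordinates $(s,v)$: from $s=x+y$ and $v=(y-x)/(x+y)$ one computes $\partial_x-\partial_y=-\tfrac2s\partial_v$, while $xy=\tfrac{s^2}4(1-v^2)$ and $y-x=sv$. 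Since $\tfrac2s$ is independent of $v$, one gets $(\partial_x-\partial_y)^2=\tfrac4{s^2}\partial_v^2$, and substituting yields $\widetilde P=(1-v^2)\partial_v^2-2v\partial_v$, which is \eqref{eqn:Ptilde}; I note this is the Legendre operator, consistent with the eigenvalues $-\ell(\ell+1)$ of Corollary \ref{cor:Peigen}.

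The delicate point, which I expect to be the main obstacle, is that the identity $\zeta_j\,{\mathcal{F}}F={\mathcal{F}}(\sqrt{-1}\,\partial_{x_j}F)$ holds only up to boundary contributions at the axes $x=0$ and $y=0$, because ${\mathcal{F}}$ is a Laplace (half-line) rather than a full Fourier transform. Integrating by parts in the two occurrences of $(\zeta_1-\zeta_2)$ produces delta-type boundary terms supported on $\{x=0\}$ and $\{y=0\}$, and the crux is that those generated by $(\zeta_1-\zeta_2)^2\partial_{\zeta_1}\partial_{\zeta_2}$ cancel exactly against those generated by $-(\zeta_1-\zeta_2)(\partial_{\zeta_1}-\partial_{\zeta_2})$. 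A clean way to bypass this bookkeeping is to verify the operator identity first on the dense subspace $\widetilde{\mathcal{F}}(C_c^\infty({\mathbb{R}}_+\times(-1,1)))$, where $F$ is supported in the open quadrant and no boundary terms arise, and then to extend by continuity; I would nonetheless exhibit the cancellation explicitly to confirm that $\widetilde P$ is given by \eqref{eqn:Ptilde} on the full relevant domain.
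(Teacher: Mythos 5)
Your proposal is correct and follows essentially the same route as the paper: part (1) via the Paley--Wiener isometry combined with the Jacobian $dx\,dy=\tfrac s2\,ds\,dv$, and part (2) via the correspondence $\partial_{\zeta_j}\mapsto\sqrt{-1}\,x_j$, $\zeta_j\mapsto\sqrt{-1}\,\partial_{x_j}$ followed by the passage to $(s,v)$, where your normal-ordered $xy(\partial_x-\partial_y)^2+(y-x)(\partial_x-\partial_y)$ agrees with the paper's $(\partial_x-\partial_y)^2\circ(xy)+(\partial_x-\partial_y)\circ(x-y)$. Your additional observation that $\zeta_j\circ{\mathcal{F}}={\mathcal{F}}\circ\sqrt{-1}\,\partial_{x_j}$ holds only up to boundary terms on the axes, and that these terms cancel exactly between the two pieces of $P$, is a genuine refinement of a point the paper asserts without comment (its ``Weyl algebra isomorphism'' is literally valid only modulo such boundary contributions), and the cancellation you predict does indeed occur.
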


\begin{proof}
(1)\enspace
For any $F(x,y)$, 
 one has 
$
  \|{\mathcal{F}} F\|_{{\bf{H}}(\Pi \times \Pi)}^2
  =
  (2 \pi)^2 \|F\|_{L^2({\mathbb{R}}_+ \times {\mathbb{R}}_+, d x d y)}^2
  =
  2 \pi^2 \|F(\frac s 2 (1-v), \frac s 2 (1+v))\|_{L^2({\mathbb{R}}_+ \times (-1,1), s d s d v)}^2.  
$
\newline
(2)\enspace
The Fourier transform ${\mathcal{F}}$ induces
 an isomorphism
 between the two Weyl algebras 
 ${\mathbb{C}}[\zeta_1, \zeta_2, \frac{\partial}{\partial \zeta_1}, \frac{\partial}{\partial \zeta_2}]$ and ${\mathbb{C}}[x, y, \frac{\partial}{\partial x}, \frac{\partial}{\partial y}]$
 by sending 
 $\frac{\partial}{\partial \zeta_1}$, $\frac{\partial}{\partial \zeta_2}$, 
 $\zeta_1$, and $\zeta_2$
 to $\sqrt{-1}x$, $\sqrt{-1}y$, 
 $\sqrt{-1}\frac{\partial}{\partial x}$, 
 and $\sqrt{-1}\frac{\partial}{\partial y}$, 
 respectively.  
In particular, 
 the holomorphic differential operator $P$
 in \eqref{eqn:23031847}
 is transformed
 into the operator 
$\widehat P
=(\partial_x-\partial_y)^2(x y)+(\partial_x-\partial_y)(x - y).  
$

By the change of variables $(x,y)=(\frac s 2(1-v), \frac s 2(1+v))$, 
 one has 
\[
  x-y=-s v, 
\qquad
 x y=\frac{s^2} 4(1-v^2), 
\qquad
  \partial_x - \partial_y = -\frac 2 s \frac{\partial}{\partial v}.  
\]
Hence the differential operator $\widehat P$ is transformed into $\widetilde P$. 
\end{proof}

\begin{proof}
[Proof of (1) and (2) in Theorem \ref{thm:Hardy}]
(1)\enspace
By Proposition \ref{prop:Ftilde}, 
 the differential operator $P$ is equivalent via $\widetilde {\mathcal{F}}$
 to the Legendre differential operator $\widetilde P$
 which does not involve the variable $s$.  
Since $\widetilde P$ defines a self-adjoint operator
 on $L^2({\mathbb{R}}_+ \times (-1,1), s d s d v)$, 
 see Fact \ref{fact:legendre} (2) in Appendix, 
 so does $P$ on ${\bf{H}}(\Pi \times \Pi)$
 via $\widetilde{\mathcal{F}}$.  
\newline
(2)\enspace
By \eqref{eqn:Ptilde}, 
 $P f = \lambda f$
 if and only if $\widetilde P(\widetilde{\mathcal{F}}^{-1}f)=\lambda(\widetilde{\mathcal{F}}^{-1} f)$.  
Hence $\widetilde{\mathcal{F}}$ induces an isomorphism
 $L^2({\mathbb{R}}_+, s d s) \otimes {\mathbb{C}} P_{\ell}(v)
\overset \sim \rightarrow {\bf{H}}(\Pi \times \Pi)_{\ell}$
  for every $\ell \in {\mathbb{N}}$, 
 where $P_{\ell}(v)$ is the $\ell$-th Legendre polynomial.  
Therefore the proof of the second statement
 is reduced to the classical theorem 
 that $\{P_{\ell}\}_{\ell \in {\mathbb{N}}}$
 forms an orthogonal basis
 in $L^2((-1,1), d v)$, 
 see Fact \ref{fact:legendre} (1).  
\end{proof}

To prove the third statement of Theorem \ref{thm:Hardy}, 
 we apply the \lq\lq{generating operator}\rq\rq\
 $T$ to 
the diagram below:
\begin{alignat}{5}
&{\mathcal{O}}(\Pi \times \Pi)
&&\,\,\supset\,\,
&&{\bf{H}}(\Pi \times \Pi)
&&\,\,\overset \sim {\underset{\widetilde{\mathcal{F}}}\leftarrow}\,\,
&&L^2({\mathbb{R}}_{+}, s d s) \widehat \otimes L^2(-1,1)
\notag
\\
& \hphantom{MM}\cup
&& 
&& \hphantom{MM}\cup
&&
&&\hphantom{MMMMM}\cup
\label{eqn:FSol}
\\
&\Sol(\Pi \times \Pi)_{\ell}
&&\,\,\supset\,\,
&&{\bf{H}}(\Pi \times \Pi)_{\ell}
&&\,\,\overset \sim {\underset{\widetilde{\mathcal{F}}}\leftarrow}\,\,
&&L^2({\mathbb{R}}_+, s d s) \otimes {\mathbb{C}} P_{\ell}(v).  
\notag
\end{alignat}
We recall
 that the weighted Bergman space is defined by 
\[
  {\bf{H}}^2(\Pi)_{\lambda}:={\mathcal{O}}(\Pi) \cap L^2(\Pi, y^{\lambda-2} d x d y) 
\]
for $\lambda >1$.  
We also recall some basic properties
 of the Fourier--Laplace transform
 of one variable 
 $\varphi(\xi) \mapsto ({\mathcal{F}}_{\mathbb{R}} \varphi)(z):=\int_{0}^{\infty}\varphi(\xi)e^{\sqrt{-1} z \xi}d \xi$.  
By the Plancherel formula, 
 one has 
\[
  \int_{\mathbb{R}}|{\mathcal{F}}_{\mathbb{R}} \varphi(x+\sqrt{-1}y)|^2 d x
  =
  2 \pi \int_0^{\infty}|\varphi(\xi)|^2e^{-2 y \xi}d \xi.  
\]
Integrating the both-hand sides against the measure
 $y^{\lambda-2}d y$, 
one obtains 
\begin{equation}
\label{eqn:FBlmd}
  \|{\mathcal{F}}_{\mathbb{R}} \varphi\|_{{\bf{H}}^2(\Pi)_{\lambda}}^2
=2^{2-\lambda}\pi \Gamma(\lambda-1)\|\varphi\|_{L^2({\mathbb{R}}_+, \xi^{1-\lambda}d \xi)}^2. 
\end{equation}
Thus ${\mathcal{F}}_{\mathbb{R}}$ gives
 a bijection from $L^2({\mathbb{R}}_+, \xi^{1-\lambda} d \xi)$
 onto ${\bf{H}}^2(\Pi)_{\lambda}$, 
 see \cite[Thm.\ XIII.1.1]{xFaKo94} for details.  

We show the following:

\begin{proposition}
\label{prop:IHES230326}
Let 
 $c_{\ell}:=\frac{(-1)^{\frac 3 2 {\ell}}}{(2\ell+1) \ell !}$ and 
\[
  T^{\mathcal{F}}(h(z) P_{\ell}(v))
  :=
  c_{\ell}\, h(\xi)\, \xi^{\ell+1}\, t^{\ell}.  
\]
Then the following diagram commutes.  
\begin{alignat*}{3}
&\hphantom{M}{\bf{H}}(\Pi \times \Pi)_{\ell}
&&\,\,\underset{\widetilde {\mathcal{F}}}{\overset \sim \leftarrow}\,\,
&&L^2({\mathbb{R}}_+, s d s) \otimes {\mathbb{C}}P_{\ell}(v)
\\
& \hphantom{MM}T\,\, \rotatebox[origin=c]{270}{$\overset \sim \rightarrow$}
&&
&&\hphantom{MMMM}\rotatebox[origin=c]{270}{$\overset \sim \rightarrow$}\,\,
T^{\mathcal{F}}
\\
&
{\bf{H}}^2(\Pi)_{2+2\ell}
\otimes
{\mathbb{C}} t^{\ell}
&&\,\,\underset{{\mathcal{F}}_{\mathbb{R}} \otimes \operatorname{id}}{\overset \sim \leftarrow}\,\,
&&L^2({\mathbb{R}}_+, \xi^{-1-2\ell} d \xi) \otimes {\mathbb{C}} t^{\ell}
\end{alignat*}
\end{proposition}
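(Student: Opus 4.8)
The plan is to verify the commutativity on a dense subspace by testing the generating operator $T$ against the exponential building blocks of the Fourier--Laplace transform $\mathcal{F}$, and then tracking how the Rankin--Cohen coefficients in \eqref{eqn:RC} get converted into Legendre polynomials. Concretely, commutativity of the diagram is equivalent to the single scalar identity $t^{-\ell}(Tf)(z)=c_{\ell}\,(\mathcal{F}_{\mathbb{R}}(\varphi\cdot\xi^{\ell+1}))(z)$, where $f:=\widetilde{\mathcal{F}}(\varphi(s)P_{\ell}(v))$ and $\varphi$ ranges over a convenient dense subspace of $L^2({\mathbb{R}}_+, s\,ds)$, say $C_c^{\infty}({\mathbb{R}}_+)$. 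By the definition of $\widetilde{\mathcal{F}}$ and the change of variables $(x,y)=(\tfrac s2(1-v),\tfrac s2(1+v))$, this $f$ equals $\mathcal{F}F$ with $F(x,y)=\varphi(x+y)P_{\ell}(\tfrac{y-x}{x+y})$, and hence is a superposition of the exponentials $e_{a,b}(\zeta_1,\zeta_2):=e^{\sqrt{-1}(a\zeta_1+b\zeta_2)}$ with $(a,b)=(x,y)$.

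The key computational input is the action of $R_{\ell'}$ on such an exponential. From \eqref{eqn:RC} one has $R_{\ell'}e_{a,b}(z)=(\sqrt{-1})^{\ell'}e^{\sqrt{-1}(a+b)z}\sum_{j}(-1)^j\binom{\ell'}{j}^2 a^{\ell'-j}b^j$, and the inner sum is exactly a Legendre polynomial in disguise: $\sum_{j=0}^{\ell'}\binom{\ell'}{j}^2 a^{\ell'-j}(-b)^j=(-(a+b))^{\ell'}P_{\ell'}\!\left(\tfrac{b-a}{a+b}\right)$, which follows by substituting $w=\tfrac{b-a}{a+b}$ into the expansion $P_{\ell'}(w)=2^{-\ell'}\sum_{k}\binom{\ell'}{k}^2(w-1)^{\ell'-k}(w+1)^k$. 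Therefore $R_{\ell'}e_{a,b}(z)=(-\sqrt{-1})^{\ell'}(a+b)^{\ell'}P_{\ell'}\!\left(\tfrac{b-a}{a+b}\right)e^{\sqrt{-1}(a+b)z}$, and in the coordinates above, where $a+b=s$ and $\tfrac{b-a}{a+b}=v$, this is simply $(-\sqrt{-1})^{\ell'}s^{\ell'}P_{\ell'}(v)e^{\sqrt{-1}sz}$.

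By Theorem \ref{thm:gRC} the coefficient of $t^{\ell'}$ in $Te_{a,b}$ is $\tfrac{1}{\ell'!}R_{\ell'}e_{a,b}$; integrating against $F$ and passing to the $(s,v)$ coordinates (Jacobian $\tfrac s2$) factorizes the $t^{\ell'}$-coefficient of $Tf$ as $\tfrac{(-\sqrt{-1})^{\ell'}}{2\,\ell'!}\bigl(\int_0^{\infty}\varphi(s)s^{\ell'+1}e^{\sqrt{-1}sz}\,ds\bigr)\bigl(\int_{-1}^{1}P_{\ell}(v)P_{\ell'}(v)\,dv\bigr)$. The orthogonality relation $\int_{-1}^{1}P_{\ell}P_{\ell'}\,dv=\tfrac{2}{2\ell+1}\delta_{\ell\ell'}$ (Fact \ref{fact:legendre}) annihilates every term with $\ell'\neq\ell$, in agreement with $f\in\Sol(\Pi\times\Pi)_{\ell}$ and Corollary \ref{cor:Luminy0316}, and the surviving $\ell'=\ell$ term yields $t^{-\ell}(Tf)(z)=\tfrac{(-\sqrt{-1})^{\ell}}{(2\ell+1)\ell!}\,\mathcal{F}_{\mathbb{R}}(\varphi\cdot\xi^{\ell+1})(z)$. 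Since $(-1)^{\frac32\ell}=(-\sqrt{-1})^{\ell}$, the scalar equals $c_{\ell}$, which is precisely the asserted identity.

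The main obstacle is twofold. The first is the combinatorial-to-special-function identity turning the Rankin--Cohen weights $(-1)^j\binom{\ell'}{j}^2$ into $P_{\ell'}$; this is the exact point at which the residue calculus manufactures orthogonal polynomials, and it is what makes Legendre orthogonality available to collapse the $t$-series. The second is the rigorous justification of interchanging the infinite $t$-series with the integral over $(x,y)$. I would handle this by working first on $\varphi\in C_c^{\infty}({\mathbb{R}}_+)$, where $F$ has compact support, so that the Fourier--Laplace integrals and their $\zeta$-derivatives converge absolutely on compacta and the $t$-series converges uniformly there. Since all four maps in the diagram are isomorphisms that are unitary up to a scalar (in particular $T^{\mathcal{F}}$, being multiplication by $c_{\ell}\xi^{\ell+1}$, is bounded as a map into $L^2({\mathbb{R}}_+,\xi^{-1-2\ell}\,d\xi)$), the identity verified on the dense subspace $\widetilde{\mathcal{F}}\bigl(C_c^{\infty}({\mathbb{R}}_+)\otimes{\mathbb{C}}P_{\ell}(v)\bigr)$ extends by continuity to all of ${\bf{H}}(\Pi\times\Pi)_{\ell}$, establishing the commutativity of the diagram.
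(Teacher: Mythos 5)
Your core computation is correct and is essentially the paper's own proof: both arguments reduce to evaluating the Rankin--Cohen bracket on the exponentials $e^{\sqrt{-1}(x\zeta_1+y\zeta_2)}$, both convert the weights $(-1)^j\binom{\ell}{j}^2$ into a Legendre polynomial via the Rodrigues-type expansion \eqref{eqn:Pl}, and both finish with the Legendre norm $\int_{-1}^1 P_\ell^2\,dv=\tfrac{2}{2\ell+1}$, landing on the same constant $c_\ell=(-\sqrt{-1})^{\ell}/((2\ell+1)\ell!)$. The only organizational difference is that the paper first invokes Corollary \ref{cor:Luminy0316} (since $\widetilde{\mathcal{F}}(hP_\ell)\in\Sol(\Pi\times\Pi)_\ell$, only the $t^\ell$-coefficient of $Tf$ can be nonzero) and then computes the single bracket $R_\ell$, whereas you compute every $t^{\ell'}$-coefficient and let orthogonality annihilate $\ell'\neq\ell$; this costs nothing and gives an independent confirmation of the corollary in this case.

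One point in your closing step needs repair. To extend from $\varphi\in C_c^\infty(\mathbb{R}_+)$ to all of $L^2(\mathbb{R}_+,s\,ds)$ you assert that ``all four maps in the diagram are unitary up to a scalar''; for the left vertical arrow $T$ this is precisely Theorem \ref{thm:Hardy}\,(3), which the paper \emph{deduces from} this proposition, so as written the continuity argument is circular. Two non-circular fixes are available. Either avoid density altogether and run your computation directly for arbitrary $h\in L^2(\mathbb{R}_+,s\,ds)$: differentiation under the integral sign is justified because on compact subsets of $\Pi\times\Pi$ the kernel satisfies $|e^{\sqrt{-1}\frac{s}{2}((1-v)\zeta_1+(1+v)\zeta_2)}|\le e^{-\delta s}$, and $\int_0^\infty |h(s)|s^{\ell+1}e^{-\delta s}\,ds<\infty$ by Cauchy--Schwarz (this is what the paper's proof implicitly does). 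Or keep the density argument but justify continuity of the left arrow without Theorem \ref{thm:Hardy}\,(3): convergence in the Hardy norm implies locally uniform convergence on $\Pi\times\Pi$ (bounded point evaluations), and $T$, being a contour integral over fixed compact contours, is continuous for locally uniform convergence, while on the right-hand side Bergman-norm convergence likewise implies locally uniform convergence; the two holomorphic functions then agree everywhere.
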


\begin{proof}
Take any $h \in L^2({\mathbb{R}}_+, s d s)$.  
Since $\widetilde{\mathcal{F}}(h P_{\ell})
 \in {\bf{H}}(\Pi \times \Pi)_{\ell}$
 by Proposition \ref{prop:Ftilde} and its proof, 
 one has
\[
   \ell ! T(\widetilde{\mathcal{F}}(h P_{\ell}))(z,t)
=
t^{\ell} R_{\ell}
\widetilde{\mathcal{F}}(h P_{\ell})(z)
\]
by Theorem \ref{thm:gRC} and 
Corollary \ref{cor:Luminy0316}.  
By the definition of $\widetilde{\mathcal{F}}$, 
 one has
\[
  \widetilde{\mathcal{F}}(h P_{\ell})(\zeta_1, \zeta_2)
=
  \frac 1 2 
  \int_{0}^{\infty} \int_{-1}^1
  h(z) P_{\ell}(v)
  G(s, v; \zeta_1, \zeta_2)
  s d s d v, 
\]
 where
$
   G(s, v;\zeta_1, \zeta_2):=
 e^{\sqrt{-1} \frac s 2 ((1-v)\zeta_1+(1+v)\zeta_2)}.  
$
The Legendre polynomials $P_{\ell}(v)$
 and the Rankin--Cohen bidifferential operators 
$
   R_{\ell} \colon {\mathcal{O}}({\mathbb{C}}^2) \to {\mathcal{O}}({\mathbb{C}})$
 are related to each other via the function $G$ as follows:
\begin{align*}
R_{\ell}G(s, v;\zeta_1, \zeta_2)
=&
\sum_{j=0}^{\ell}
(-1)^j \left(\ell  \atop j \right)^2
\left. \frac{\partial^{\ell}}{\partial\zeta_1^{\ell-j} \partial \zeta_2^j}\right|_{\zeta_1=\zeta_2=z}
G(s, v, \zeta_1, \zeta_2)
\\
=& (-1)^{\frac 3 2 \ell} e^{\sqrt{-1}zs} s^{\ell} P_{\ell}(v)
\end{align*}
for all $\ell \in {\mathbb{N}}$.  
Here we have used 
 the Rodrigues formula \eqref{eqn:Pl}
 for  the second equality.

By using the formula for the $L^2$-norm $P_{\ell}$
 (see Fact \ref{fact:legendre} (1)), 
 one has
\begin{align*}
R_{\ell}\widetilde{\mathcal{F}}(h P_{\ell})(z)
=&\frac{(-1)^{\frac 3 2 \ell}}2 \int_0^{\infty}\int_{-1}^1
 h(s) P_{\ell}(v)^2 e^{\sqrt{-1}z s} s^{\ell+1} d s d v
\\
=&\frac{(-1)^{\frac 3 2 \ell}}{2\ell+1}{\mathcal{F}}_{\mathbb{R}}(h(s)s^{\ell+1})(z).  
\end{align*}
Hence Proposition \ref{prop:IHES230326} is proved.  
\end{proof}

\begin{proof}
[Proof of (3) in Theorem \ref{thm:Hardy}]
In light of the isomorphism
\[
\widetilde{\mathcal{F}} \colon L^2({\mathbb{R}}_+, s d s) \otimes {\mathbb{C}}P_{\ell}(v) \overset \sim \longrightarrow {\bf{H}}(\Pi \times \Pi)_{\ell}, 
\]
 we take $h \in L^2({\mathbb{R}}_+, s d s)$
 and set $f :=\widetilde{\mathcal{F}}(h P_{\ell}) \in  {\bf{H}}(\Pi \times \Pi)_{\ell}$.  
By Proposition \ref{prop:Ftilde}, 
 one has
\begin{equation}
\label{eqn:Tl1}
\|f\|_{ {\bf{H}}(\Pi \times \Pi)}^2
=2\pi^2 \|h\|_{L^2({\mathbb{R}}_+, s d s)}^2 \|P_{\ell}\|_{L^2(-1,1)}^2
=\frac{4\pi^2}{2 \ell+1}\|h\|_{L^2({\mathbb{R}}_+, s d s)}^2.  
\end{equation}
Applying \eqref{eqn:FBlmd} to 
$
   \varphi:=t^{-\ell}T^{{\mathcal{F}}}(h P_{\ell})
$
 with $\lambda=2\ell+2$, 
 one has from Proposition \ref{prop:IHES230326}
 that 
\begin{align}
\notag
\|t^{-\ell} T f\|_{{\bf{H}}^2(\Pi)_{2+2\ell}}^2
=&2^{-2\ell} \pi (2 \ell)! \|t^{-\ell} T^{\mathcal{F}} (h P_{\ell})\|_{L^2({\mathbb{R}}_+, \xi^{-1-2\ell} d \xi)}^2 
\\
\label{eqn:Tl2}
=&\frac{\pi (2 \ell)!}{2^{2 \ell}(2\ell+1)^2(\ell !)^2}\|h\|_{L^2({\mathbb{R}}_+, s d s)}^2.  
\end{align}
It follows from \eqref{eqn:23041613}, \eqref{eqn:Tl1} and \eqref{eqn:Tl2}
 that 
\[
  \|t^{-\ell} T f\|_{{\bf{H}}^2(\Pi)_{2+2\ell}}^2
  =
  \frac{(2\ell-1)!!}{4 \pi (2 \ell+1) (2\ell)!!}
  \|f\|_{{\bf{H}}(\Pi \times \Pi)}^2
  =
  b_{\ell} \|f\|_{{\bf{H}}(\Pi \times \Pi)}^2.   
\]
Hence the third statement of Theorem \ref{thm:Hardy} is proved.  
\end{proof}

\section{Representation theory and the generating operator $T$}
\label{sec:SL}

If $D$ is simply connected, 
 then the group $\operatorname{Aut}(D)$
 of biholomorphic diffeomorphisms 
 acts transitively on $D$.  
This section discusses different perspectives
 of our generating operator $T$ from the viewpoint
 of the automorphism group of the domain, 
 in particular, from the (infinite-dimensional) representation theory of real reductive groups.  
Lie theory reveals structures
 of the generating operator $T$
 that are not otherwise evident.

\subsection {Normal derivatives and the generating operator $T$}
~~~
\newline
Let $\pi$ be an irreducible representation of a group $G$, 
 and $G'$ a subgroup.  
The $G$-module $\pi$ may be seen 
 as a $G'$-module
 by restriction, 
 for which we write $\pi|_{G'}$.  
For an irreducible representation $\rho$ 
 of the subgroup $G'$, 
 a {\it{symmetry breaking operator}}
 (SBO for short)
 is an intertwining operator from $\pi|_{G'}$
 to $\rho$, 
 whereas a {\it{holographic operator}} is an intertwining operator from 
$\rho$ to $\pi|_{G'}$.  
Suppose that the representations $\pi$ and $\rho$
 are geometrically defined, 
 {\it{e.g., }}
 they are realized
 in the spaces $\Gamma(X)$ and $\Gamma(Y)$
 of functions on a $G$-manifold $X$
 and its $G'$-submanifold $Y$, 
 respectively, 
 or more generally, 
 in the spaces of sections for some equivariant vector bundles.

When the restriction $\pi|_{G'}$
 is discretely decomposable
 \cite{xkdecomp}, 
 one may expect
 that taking \lq\lq{normal derivatives}\rq\rq\ 
 with respect to the submanifold $Y \hookrightarrow X$
 would yield SBOs.  
However, 
 this is not the case
even for the irreducible decomposition ({\it{fusion rule}})
 of the tensor product of two representations of $SL(2,{\mathbb{R}})$.  
See \cite[Thm.\ 5.3]{KP16} for more general cases.  
The underlying geometry for the fusion rule 
 of the Hardy spaces
 ${\bf{H}}(\Pi)$
 is given by a diagonal embedding
 of $Y=\Pi$ into $X:=Y \times Y$.  
Instead of using $X=\Pi \times \Pi$, 
 we consider $\widetilde X:=U_{\Pi}$
 as in Example \ref{ex:UD}.  
In this case 
 the \lq\lq{normal derivative}\rq\rq\
 of $\ell$-th order
 with respect to $Y \hookrightarrow \widetilde X$
 is given simply by 
\[
   N_{\ell}:=\operatorname{Rest}_{t=0} \circ (\frac{\partial}{\partial t})^{\ell}.  
\]
A distinguishing feature
 of the generating operator $T$
 is that all the normal derivatives
 $N_{\ell}$ give rise to symmetry breaking operators
 after the transformation by $T$, 
 symbolically written in the following diagram
 (see \eqref{eqn:pilmd} for the notation $\pi_{\lambda}$):
\begin{alignat*}{4}
&{\mathcal{O}}(X) 
&&\hphantom{MM}\overset{T}\hookrightarrow 
&&\hskip 1pc {\mathcal{O}}(\widetilde X)
\\
&\text{SBO}\searrow
&&\hphantom{MM}\circlearrowright
&&\swarrow \text{$\ell$-th normal derivative $N_{\ell}$}
\\
& &&(\pi_{2+2\ell},{\mathcal{O}}(Y)) &&
\end{alignat*}

\subsection{Modular forms and the generating operator $T$}

The Rankin--Cohen brackets were introduced in \cite{xCo75, xRa56}
 to construct holomorphic modular forms
 of higher weight from those of lower weight.  
This section highlights the relationship
 of our generating operator $T$ in \eqref{eqn:Luminy230314}
 and modular forms.

By Theorem \ref{thm:gRC}, 
 one has
\begin{equation}
\label{eqn:NTRl}
   N_{\ell} \circ T = R_{\ell},
\end{equation}
where $R_{\ell}$ are the Rankin--Cohen brackets \eqref{eqn:RC}.  
Then by a direct computation \cite{xCo75}
or by the F-method \cite{KP16}, 
 one sees the following covariance property:

\begin{proposition}
\label{prop:TSL}
For all $\ell \in {\mathbb{N}}$, 
 for any $g = \begin{pmatrix} a & b \\ c & d\end{pmatrix} \in SL(2,{\mathbb{R}})$
 and for any $f \in {\mathcal{O}}(\Pi \times \Pi)$, 
 one has
\[
N_{\ell} \circ (T f^g)(z)=(c z + d)^{-2\ell -2}((N_{\ell} \circ T) f)(\frac{a z +b}{c z +d})
\]
where 
$f^g(\zeta_1, \zeta_2):=(c\zeta_1+d)^{-1}(c\zeta_2+d)^{-1}f(\frac{a \zeta_1 + b}{c \zeta_1+d}, \frac{a \zeta_2 + b}{c \zeta_2+d})$.  
\end{proposition}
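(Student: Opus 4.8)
The plan is to reduce the assertion to the classical covariance of the Rankin--Cohen bracket. By the identity \eqref{eqn:NTRl} one has $N_\ell\circ T=R_\ell$, so the claimed equality is equivalent to
\begin{equation*}
  R_\ell(f^g)(z)=(cz+d)^{-2\ell-2}(R_\ell f)\big(\tfrac{az+b}{cz+d}\big).
\end{equation*}
Since the bidifferential operator $R_\ell$ in \eqref{eqn:RC} depends only on the $\ell$-jet of $f$ along the diagonal $\zeta_1=\zeta_2=z$, and since such jets are spanned, via linearity, by those of product functions $f(\zeta_1,\zeta_2)=f_1(\zeta_1)f_2(\zeta_2)$, it suffices to prove the identity for such $f$. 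For a product one has $f^g=f_1^g\otimes f_2^g$ with $f_i^g(\zeta)=(c\zeta+d)^{-1}f_i(g\cdot\zeta)$ the weight-one slash of $f_i$, and $R_\ell(f_1\otimes f_2)=\sum_j(-1)^j\binom{\ell}{j}^2 f_1^{(\ell-j)}f_2^{(j)}$ is the classical Rankin--Cohen bracket; the identity to be proved is exactly its covariance from bidegree $(1,1)$ to weight $2\ell+2$.

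Next I would reduce to generators of the group. Writing $j(g,\zeta):=c\zeta+d$, the cocycle relation $j(g_1g_2,\zeta)=j(g_1,g_2\cdot\zeta)\,j(g_2,\zeta)$ shows that the set of $g$ for which the covariance holds is a subgroup of $SL(2,\mathbb{R})$; as the latter is connected and generated by the translations $n_b=\left(\begin{smallmatrix}1&b\\0&1\end{smallmatrix}\right)$, the dilations $a_s=\left(\begin{smallmatrix}s&0\\0&s^{-1}\end{smallmatrix}\right)$, and the inversion $w=\left(\begin{smallmatrix}0&-1\\1&0\end{smallmatrix}\right)$, it is enough to treat these three. For $n_b$ one has $f_i^{n_b}(\zeta)=f_i(\zeta+b)$ and both sides reduce to the shifted bracket $(R_\ell(f_1\otimes f_2))(z+b)$; this translation invariance is the finite counterpart of the relation $\frac{d}{dz}a_{i,j}=a_{i+1,j}+a_{i,j+1}$ already used in the proof of Theorem~\ref{thm:23032333}(3). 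For $a_s$ one has $f_i^{a_s}(\zeta)=s\,f_i(s^2\zeta)$, and a weight count shows both sides equal $s^{2\ell+2}(R_\ell(f_1\otimes f_2))(s^2z)$; this is pure homogeneity bookkeeping.

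The substantial point, and the main obstacle, is the inversion $w$, for which $f_i^w(\zeta)=\zeta^{-1}f_i(-1/\zeta)$. Here I would expand the derivatives $\partial^{m}\big(\zeta^{-1}f_i(-1/\zeta)\big)$ by the Leibniz and Fa\`a di Bruno rules, insert them into $\sum_j(-1)^j\binom{\ell}{j}^2 (f_1^w)^{(\ell-j)}(f_2^w)^{(j)}$, and verify that everything collapses to $z^{-2\ell-2}(R_\ell(f_1\otimes f_2))(-1/z)$. The cancellation hinges on the special coefficients $\binom{\ell}{j}^2$ and amounts to a Vandermonde/Saalsch\"utz-type summation; I expect this to be the crux of the argument. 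It can be carried out as in Cohen's original verification \cite{xCo75}, or more structurally by the F-method of \cite{KP16}, in which the covariance condition becomes the Gegenbauer ordinary differential equation whose polynomial solution reproduces precisely the coefficients $\binom{\ell}{j}^2$ --- the same algebraic mechanism that governs the eigenfunction identity in Example~\ref{ex:23050410}.
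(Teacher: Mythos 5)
Your proposal follows the paper's own route: the identity $N_\ell\circ T=R_\ell$ from \eqref{eqn:NTRl} reduces the statement to the classical covariance of the Rankin--Cohen brackets, which the paper, exactly like you, settles by appealing to Cohen's direct computation \cite{xCo75} or to the F-method of \cite{KP16}. Your additional scaffolding (reduction to product functions and to the generators $n_b$, $a_s$, $w$, with the translation and dilation cases checked by hand) is sound, but since the only nontrivial generator, the inversion, is ultimately deferred to those same two references, the argument is essentially the paper's.
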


To clarify its representation-theoretic meaning, 
 we write $\pi_{\lambda}$ $(\lambda \in {\mathbb{Z}})$
 for a representation of $SL(2,{\mathbb{R}})$
 on ${\mathcal{O}}(\Pi)$
 given by 
\begin{equation}
\label{eqn:pilmd}
  \pi_{\lambda}(g)h(z)
  =
  (c z + d)^{-\lambda}
  h(\frac {a z+b}{c z+d})
\qquad
 \text{for $g^{-1}=\begin{pmatrix} a & b \\ c & d\end{pmatrix}$}.  
\end{equation}
Then Proposition \ref{prop:TSL} tells us
 that 
\begin{equation}
\label{eqn:modularity}
 (N_{\ell} \circ T)\circ(\pi_1(g) \boxtimes \pi_1(g))
 =
 \pi_{2\ell+2}(g) \circ (N_{\ell}\circ T)
\end{equation}
for any $g \in SL(2,{\mathbb{R}})$.  
Therefore, 
 for a subgroup $\Gamma$, 
 $N \circ T(f)$ is $\Gamma$-invariant
 whenever $f$ is $(\Gamma \times \Gamma)$-invariant.

Suppose that $\Gamma$ is a congruence subgroup of $SL(2,{\mathbb{Z}})$.  
For any modular form $h$
 of level $\Gamma$ and weight $1$, 
 we set 
\[
  H(z,t):=
  \frac{1}{(2 \pi \sqrt{-1})^2}
  \oint_{C_1} \oint_{C_2} \frac{h(\zeta_1)h(\zeta_2)}{(\zeta_1-z)(\zeta_2-z)+t(\zeta_1-\zeta_2)} d \zeta_1 d \zeta_2.  
\]
It follows from \eqref{eqn:NTRl}
 that 
$
   (N_{\ell}H)(z)
  =\left. (\frac{\partial}{\partial t})^{\ell}\right|_{t=0}H(z,t)
  =R_{\ell}(h(\zeta_1) h(\zeta_2))(z)
$
 is a modular form
 of level $\Gamma$ and weight $2 \ell +2$ for all $\ell \in {\mathbb{N}}$.

\subsection{Unitary representation and the generating operator $T$}
\label{subsec:6.3}
Viewed as a representation 
of the universal covering group $SL(2,{\mathbb{R}})\widetilde{\hphantom{i}}$, 
 the representation $\pi_{\lambda}$ is well-defined
 for all $\lambda \in {\mathbb{C}}$.  
For $\lambda>1$, 
 $\pi_{\lambda}$ leaves the weighted Bergman space
$
{\bf{H}}^2(\Pi)_{\lambda}
 =
{\mathcal{O}}(\Pi) \cap L^2(\Pi, y^{\lambda-2}d x d y)
$
 invariant, 
 and $SL(2,{\mathbb{R}})\tilde{\,\,}$ acts
 as an irreducible unitary representation
 on the Hilbert space ${\bf{H}}^2(\Pi)_{\lambda}$.  
These unitary representations $(\pi_{\lambda}, {\bf{H}}^2(\Pi)_{\lambda})$
 are referred 
 to as (relative) {\it{holomorphic discrete series representations}}
 of $SL(2,{\mathbb{R}})\tilde{\,\,}$. 
In particular, 
 the set of holomorphic discrete series representations
 of the group $P S L(2,{\mathbb{R}}) = S L(2,{\mathbb{R}})/\{\pm I_2\} \simeq \operatorname{Aut}(\Pi)$
 is given by $\{\pi_{\lambda}:\lambda=2,4,6,\dots \}$.

If $\lambda=1$
 then ${\bf{H}}^2(\Pi)_{\lambda}=\{0\}$, 
 however, 
 the Hardy space ${\bf{H}}(\Pi)$ is an invariant subspace
 of $(\pi_{\lambda}, {\mathcal{O}}(\Pi))$ 
 with $\lambda=1$, 
 and $SL(2,{\mathbb{R}})$ acts  on ${\bf{H}}(\Pi)$ as an irreducible unitary representation, 
 too.

With these notations, 
 one may interpret Theorem \ref{thm:Hardy} 
 as a decomposition
 of the completed tensor product
 of two copies of the unitary representation
 $(\pi_1, {\bf{H}}(\Pi))$
 on the Hardy space
 into a multiplicity-free discrete sum
 of irreducible unitary representations:
\[
   {\bf{H}}(\Pi) \widehat \otimes {\bf{H}}(\Pi)
   \simeq
   {\overset{\infty}{\underset{\ell=0}\sum}}^{\lower1ex\hbox{\footnotesize{$\oplus$}}} {\bf{H}}^2 (\Pi)_{2+2\ell}
\qquad
\text{(Hilbert direct sum)}.
\]
The right-hand side may be thought
 of as a \lq\lq{model}\rq\rq\
 of holomorphic discrete series representations
 of $PSL(2,{\mathbb{R}})$
 in the sense 
that all such representations occur exactly once.

\subsection{Limit of the weighted Bergman spaces}
~~~
\newline
The Hardy norm 
 $\|\cdot\|_{{\bf{H}}(\Pi)}$ may be regarded
 as the residue
 of the analytic continuation
 of the norm
 of the weighted Bergman space
 ${\bf{H}}^2(\Pi)_{\lambda}$
 which is originally defined 
 for real $\lambda>1$:
\[
   \|\cdot\|_{{\bf{H}}(\Pi)}^2
   =\lim_{\lambda \downarrow 1}
    (\lambda-1)
    \|\cdot\|_{{\bf{H}}^2(\Pi)_{\lambda}}^2.  
\]

Then the exact formula \eqref{eqn:Tlnorm} in Theorem \ref{thm:Hardy} 
 may be thought
 of as the limit of \cite[Thm.\ 2.7]{KP20}
 which dealt with the weighted Bergman spaces, 
 namely, 
 our $b_{\ell}$ in Theorem \ref{thm:Hardy}
 may be rediscovered by the following limit procedure
 with the notation 
 as in \cite[(2.3) and (2.4)]{KP20}:

\begin{align*}
& \frac {1}{(\ell !)^2} \lim_{\lambda' \downarrow 1}\lim_{\lambda'' \downarrow 1}
\frac{c_{\ell}(\lambda',\lambda'')r_{\ell}(\lambda',\lambda'')}{(\lambda'-1)(\lambda''-1)}
\\
=& \frac {1}{(\ell !)^2}
\lim_{\lambda' \downarrow 1}\lim_{\lambda'' \downarrow 1}
\frac{\Gamma(\lambda'+\ell) \Gamma(\lambda''+\ell)}{(\lambda'+\lambda''+2\ell-1)\Gamma(\lambda'+\lambda''+\ell-1)\ell!} 
\cdot
\frac{\Gamma(\lambda'+\lambda''+2\ell-1)}
{2^{2\ell+2} \pi \Gamma(\lambda')\Gamma(\lambda'')}
\\
=& \frac{(2\ell)!}{(2\ell+1)\pi(\ell !)^2 2^{2\ell+2}}
=\frac{(2\ell-1)!!}{4 \pi(2\ell+1) (2\ell)!!}
=b_{\ell}.  
\end{align*}

\section{Appendix: The Legendre polynomials}
\label{sec:appendix}
Suppose $\ell \in {\mathbb{N}}$.  
The Legendre polynomial $P_{\ell}(v)$ is a polynomial solution
 to the  Legendre differential equation:

\[
((1-v^2)\frac{d^2}{dv^2}-2v\frac{d}{dv}+\ell(\ell+1))f=0
\]
 which is normalized by 
$
  P_{\ell}(1)=1.  
$
Then it satisfies the Rodrigues formula
\begin{equation}
\label{eqn:Pl}
  P_{\ell}(v):=
  \frac 1 {2^{\ell}} \sum_{j=0}^{\ell} 
  \left(\ell \atop j\right)^2(v-1)^{\ell-j} (v+1)^j.  
\end{equation}

\begin{fact}
[{\cite{xLe785, xSrMa84}}]
\label{fact:legendre}
{\rm{(1)}}\enspace
The Legendre polynomials 
 $\{P_{\ell}(v)\}_{\ell \in {\mathbb{N}}}$ form 
 an orthogonal basis
 in the Hilbert space $L^2((-1,1), d v)$
 with the following norm:
\[
 (P_{\ell}, P_{\ell'})_{L^2(-1,1)} = \frac 2 {2\ell+1} \delta_{\ell\ell'}.  
\]

\par\noindent
{\rm{(2)}}\enspace
The differential operator $(1-v^2)\frac{d^2}{dv^2}-2v\frac{d}{dv}$
 is essentially self-adjoint 
 on the Hilbert space $L^2((-1,1), d v)$.  
\end{fact}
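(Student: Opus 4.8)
The plan is to treat the two assertions in turn, establishing part (1) by the classical orthogonal-polynomial machinery and then deducing part (2) as a soft consequence of the eigenfunction structure exhibited in part (1). Throughout I write $L=\frac{d}{dv}\bigl((1-v^2)\frac{d}{dv}\bigr)=(1-v^2)\frac{d^2}{dv^2}-2v\frac{d}{dv}$ for the Legendre operator.

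For part (1), I would first record that the explicit expression \eqref{eqn:Pl} coincides with the differential Rodrigues formula $P_\ell(v)=\frac{1}{2^\ell \ell!}\frac{d^\ell}{dv^\ell}(v^2-1)^\ell$; the two agree after expanding $(v^2-1)^\ell=(v-1)^\ell(v+1)^\ell$ by the Leibniz rule and reindexing. Orthogonality is then cleanest to obtain from the Sturm--Liouville form: since $LP_\ell=-\ell(\ell+1)P_\ell$ with distinct eigenvalues, and since integration by parts against $L$ produces no boundary contribution (the weight $1-v^2$ vanishes at $v=\pm1$), symmetry of $L$ forces $(P_\ell,P_{\ell'})=0$ for $\ell\ne\ell'$. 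The norm is computed by iterating integration by parts in the Rodrigues formula: $\ell$ integrations move all derivatives onto a single factor, leaving $(P_\ell,P_\ell)=\frac{(2\ell)!}{2^{2\ell}(\ell!)^2}\int_{-1}^1(1-v^2)^\ell\,dv$, and the remaining Beta-type integral equals $\frac{2^{2\ell+1}(\ell!)^2}{(2\ell+1)!}$, giving $\frac{2}{2\ell+1}$. Completeness---that $\{P_\ell\}$ is a basis, not merely an orthogonal system---follows from the Weierstrass approximation theorem: polynomials are dense in $C([-1,1])$ and hence in $L^2((-1,1),dv)$, and $\mathrm{span}\{P_0,\dots,P_n\}$ is precisely the space of polynomials of degree $\le n$.

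For part (2), I would invoke the general principle that a symmetric operator whose domain contains an orthonormal basis of eigenvectors with real eigenvalues is essentially self-adjoint. Concretely, take $L$ with initial domain the space of polynomials; it is symmetric there by the vanishing of the boundary terms noted above, and by part (1) the normalized Legendre polynomials form an orthonormal basis of eigenvectors lying in this domain with real eigenvalues $-\ell(\ell+1)$. To conclude, it suffices to check that the deficiency spaces are trivial: if $u$ satisfies $L^*u=\pm i\,u$, then for each $\ell$ one has $\pm i\,(u,P_\ell)=(u,LP_\ell)=-\ell(\ell+1)(u,P_\ell)$, forcing $(u,P_\ell)=0$ and hence $u=0$ by completeness. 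Thus both deficiency indices vanish and $L$ is essentially self-adjoint.

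The main obstacle is conceptual rather than computational: both endpoints $v=\pm1$ are singular points of the Legendre equation, and the solutions of $Lf=0$ are $1$ and $\log\frac{1+v}{1-v}$, both of which are square-integrable near the endpoints, so the equation sits in Weyl's limit-circle case at each end. In that regime essential self-adjointness is \emph{not} automatic from Sturm--Liouville theory and would ordinarily require an explicit imposition of boundary conditions. The eigenfunction argument above is exactly what circumvents this difficulty: by starting from the polynomial domain and exploiting the completeness established in part (1), one bypasses any delicate endpoint analysis, the degeneracy of the leading coefficient $1-v^2$ at $\pm1$ ensuring that the polynomial domain already selects the correct self-adjoint realization.
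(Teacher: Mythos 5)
Your proof is correct; note, though, that the paper itself offers no proof to compare against --- Fact \ref{fact:legendre} is quoted as a classical result with citations to Legendre and to Srivastava--Manocha, so your argument is a self-contained replacement for that citation rather than an alternative to an argument in the text. Part (1) is standard and correctly executed: the Leibniz-rule identification of \eqref{eqn:Pl} with $\frac{1}{2^\ell\ell!}\frac{d^\ell}{dv^\ell}(v^2-1)^\ell$, Sturm--Liouville orthogonality (boundary terms killed by the vanishing of $1-v^2$ at $\pm1$), the norm evaluation $\frac{(2\ell)!}{2^{2\ell}(\ell!)^2}\int_{-1}^1(1-v^2)^\ell\,dv=\frac{2}{2\ell+1}$, and completeness from Weierstrass approximation together with $\deg P_\ell=\ell$. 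The genuine added value is in part (2): the Fact as stated names no domain, and you correctly observe that this is not an innocent omission, since both endpoints are in Weyl's limit-circle case (the solutions $1$ and $\log\frac{1+v}{1-v}$ of $Lf=0$ are both square-integrable near $\pm1$), so the minimal operator on $C_c^\infty((-1,1))$ has deficiency indices $(2,2)$ and is \emph{not} essentially self-adjoint; taking the polynomials as initial domain and annihilating the deficiency spaces by pairing against the eigenbasis $\{P_\ell\}$ is exactly the right repair, and it is the reading the paper needs in the proof of Theorem \ref{thm:Hardy}, where self-adjointness of $\widetilde P$ is transported through $\widetilde{\mathcal{F}}$ to $P$ on $\mathbf{H}(\Pi\times\Pi)$. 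The only cosmetic improvement would be to state explicitly the criterion you invoke (a densely defined symmetric operator whose domain contains a complete orthogonal system of eigenvectors has deficiency indices $(0,0)$), but your computation with $L^*u=\pm\sqrt{-1}\,u$ proves precisely that, so nothing is missing.
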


The Legendre polynomials $P_{\ell}(x)$ are particular cases of the Jacobi polynomials
 $P_{\ell}^{\alpha, \beta}(x)$
 with $\alpha=\beta=0$.  

\vskip 1pc
\small{
\noindent{\bf{Acknowledgements.}}\enspace
The first named author was partially supported
 by the JSPS
 under the Grant-in Aid for Scientific Research (A) 
 (JP18H03669, JP23H00084).  
Both authors are grateful to the
Centre International de Rencontres Math{\'e}\-matiques
 (Luminy, France)
 and the
 Institut des Hautes {\'E}tudes Scientifiques
 (Bures-sur-Yvette, France), 
 where an important part of this work was done.  \vskip 1pc

\vskip 3pc

\leftline
{Toshiyuki KOBAYASHI}
\leftline
{Graduate School of Mathematical Sciences, }
\leftline
{The University of Tokyo,
 3-8-1 Komaba, Meguro, 
Tokyo, 153-8914, Japan} 

\&

\leftline
{French-Japanese Laboratory
 in Mathematics and its Interactions, }
\leftline{
FJ-LMI CNRS IRL2025, Tokyo, Japan}

\leftline{ E-mail:
\texttt{toshi@ms.u-tokyo.ac.jp}}

\vskip 1pc
\leftline
{Michael PEVZNER}
\leftline
{LMR, 
Universit{\'e} de Reims-Champagne-Ardenne, 
CNRS UMR 9008, F-51687,}
\leftline{Reims, France} 

\&

\leftline
{French-Japanese Laboratory
 in Mathematics and its Interactions, }
\leftline{
FJ-LMI CNRS IRL2025, Tokyo, Japan}
\leftline{E-mail: \texttt{ pevzner@math.cnrs.fr}}
\end{document}